\newenvironment{psm}
{\left(\begin{smallmatrix}}
{\end{smallmatrix}\right)}
\numberwithin{equation}{subsection}
\newtheorem{thm}{Theorem}[subsection]
\newtheorem{prop}[thm]{Proposition}
\newtheorem{coro}[thm]{Corollary}
\newtheorem{defn}[thm]{Definition}
\theoremstyle{remark}
\newtheorem{rmk}[thm]{Remark}
\newtheorem*{Ac}{Acknowledgements}
\newcommand{\mr}[1]{\mathrm{#1}}
\newcommand{\bA}{\mathbb{A}}
\newcommand{\bC}{\mathbb{C}}
\newcommand{\bH}{\mathbb{H}}
\newcommand{\bQ}{\mathbb{Q}}
\newcommand{\bR}{\mathbb{R}}
\newcommand{\bZ}{\mathbb{Z}}
\newcommand{\cD}{\mathcal{D}}
\newcommand{\cI}{\mathcal{I}}
\newcommand{\cS}{\mathcal{S}}
\newcommand{\fc}{\mathfrak{c}}
\newcommand{\fq}{\mathfrak{q}}
\newcommand{\fs}{\mathfrak{s}}
\newcommand{\fH}{\mathfrak{H}}
\newcommand{\fQ}{\mathfrak{Q}}
\newcommand{\sC}{\mathscr{C}}
\DeclareMathAlphabet{\mathpzc}{OT1}{pzc}{m}{it}
\newcommand{\pW}{\mathpzc{W}}
\newcommand{\ba}{\bm{a}}
\newcommand{\bz}{\bm{z}}
\newcommand{\ut}{\protect\underline{t}}
\newcommand{\utau}{\protect\underline{\uptau}}
\DeclareMathOperator{\GL}{GL}
\DeclareMathOperator{\U}{U}
\DeclareMathOperator{\OO}{O}
\DeclareMathOperator{\Sp}{Sp}
\DeclareMathOperator{\SO}{SO}
\newcommand{\cont}{\mathrm{cont}}
\newcommand{\ddet}{\mathrm{det}}
\newcommand{\sgn}{\mathrm{sgn}}
\newcommand{\Tr}{\mathrm{Tr}}
\newcommand{\vol}{\mathrm{vol}}
\DeclareMathOperator{\Hom}{Hom}
\DeclareMathOperator{\Ind}{Ind}
\DeclareMathOperator{\Lie}{Lie}
\DeclareMathOperator{\Sym}{Sym}
\newcommand{\lhra}{\ensuremath{\lhook\joinrel\longrightarrow}}
\newcommand{\lra}{\longrightarrow}
\newcommand{\ra}{\rightarrow}
\newcommand{\ol}{\overline}
\newcommand{\wh}{\widehat}
\newcommand{\wt}{\widetilde}
\newcommand{\bid}{\mathbf{1}}
\newcommand{\mvw}{\vartheta}
\newcommand{\MC}{\mathrm{MC}}
\newcommand{\hol}{\mathrm{hol}}
\newcommand{\inv}{\mathrm{inv}}
\newcommand{\leqnomode}{\tagsleft@true\let\veqno\@@leqno}
\newcommand{\reqnomode}{\tagsleft@false\let\veqno\@@eqno}
\def\l@section{\@tocline{1}{0pt}{1pc}{}{}}
\def\l@subsection{\@tocline{2}{0pt}{1pc}{4.6em}{}}
\def\l@subsubsection{\@tocline{3}{0pt}{1pc}{7.6em}{}}
\renewcommand{\tocsection}[3]{%
  \indentlabel{\@ifnotempty{#2}{\makebox[2.3em][l]{%
    \ignorespaces#1 #2.\hfill}}}#3}
\renewcommand{\tocsubsection}[3]{%
  \indentlabel{\@ifnotempty{#2}{\hspace*{2.3em}\makebox[2.3em][l]{%
    \ignorespaces#1 #2.\hfill}}}#3}
\renewcommand{\tocsubsubsection}[3]{%
  \indentlabel{\@ifnotempty{#2}{\hspace*{4.6em}\makebox[3em][l]{%
    \ignorespaces#1 #2.\hfill}}}#3}
\title{The doubling archimedean zeta integrals for $p$-adic interpolation}
\author{Zheng Liu}
\begin{document}

\begin{abstract}
We compute the archimedean doubling zeta integrals which appear in the interpolation formulas for the $p$-adic $L$-functions of Siegel modular forms constructed in \cite{SLF}.
\end{abstract}

\maketitle
\tableofcontents 
\numberwithin{equation}{subsection}

The doubling method, discovered by Garrett \cite{GaPull}, B{\"o}cherer \cite{BoDouble} and Piatetski-Shapiro--Rallis \cite{PSR},  provides an integral representation of automorphic $L$-functions for classical groups in terms of Siegel Eisenstein series. It has been vastly applied to study the properties of automorphic $L$-functions, inculding defining local $L$-factors \cite{LapidRallis,Yam}, showing the meromorphic continuation of the global $L$-functions and locating the possible poles \cite{PSR,KRPoles}, proving the algebraicity of the critical $L$-values normalized by a Petersson inner product period when the representation is algebraic \cite{Sh00,Ha81,HaSim}, and constructing the $p$-adic interpolation of those normalized critical values \cite{BS,SLF,EisWan,EHLS}.

For many applications, a key technical ingredient is about proving the desired properties of the doubling zeta integrals 
\begin{equation}\label{eq:zgenreal}
   Z_v\left(f_v(s),v_1,v_2\right)=\int_{G(F_v)}f_v(s)\left(\iota^\diamond(g,1)\right)\left<\pi_v(g)v_1,v_2\right>\,dg,
\end{equation}
at an archimedean place $v\mid \infty$, where $\left<\pi_v(g)v_1,v_2\right>$ is a matrix coefficient of the local representation at $v$ of the cuspidal automorphic representation on a classical group $G$ we are interested in, $\iota^\diamond$ is the doubling embedding of $G\times G$ into a group that doubles the size of $G$, and $f_v(s)$ is a section inside the degenerate principal series on the double sized group. In \cite{KRPoles}, it has been shown that for all $s\in\bC$ there always exists an $f_v(s)$ such that \eqref{eq:zgenreal} is nonzero; thus the possible poles of the $L$-functions are determined by those of the Siegel Eisenstein series. For arithimetic applications, the $f_v(s)$ used in \cite{KRPoles} is not good enough. One further requires that with the choice of the archimedean sections, the Siegel Eisenstein series are algebraic at the points corresponding to the critical points of the $L$-function. To this end, a rich theory of algebraic differential operators  has been developed \cite{ShDiff,ShLie,HaBun,BoDiff,Ib} when $G$ is symplectic or unitary with a Shimura variety. The argument in \cite{HaBun} shows that the algebraic differential operators, which give rise to sections with potentially nonvanishing archimedean zeta integrals, constructed through different approaches agree up to scalar. In \cite{HaSim,SLF}, the nonvanishing of the archimedean zeta integrals for those sections has been verified for the unitary and symplectic cases. When the holomorphic discrete series is of scalar weight, the holomorphic differential operators have been written down explicitly and the corresponding archimedean zeta integrals have been computed in \cite{Sh00,BS}. For vector weights, a special case is considered in \cite{GaAzint} for the unitary group $\U(a,b)$ with the requirement on the weight $\tau_1\boxtimes\tau_2$ that one of $\tau_1$ and $\tau_2$ is scalar. 

In this paper, we compute the archimedean zeta integrals for general vector weights at $s=s_0$ in the case when $G=\Sp(2n)_{/\bQ}$, $s_0$ corresponds to a critical point and the section $f_\infty(s)$ is chosen as in the construction of the $p$-adic $L$-functions in \cite{SLF}. Let us briefly recall the setting and notation {\it loc. cit}. Let $H=\Sp(4n)_{/\bQ}$ (the doubling group on which the Siegel Eisenstein series live) and $Q_H\subset H$ be the standard Siegel parabolic subgroup. Given an $(n+1)$-tuple of integers
\begin{align*}
   (\ut,k)&=(t_1,\dots,t_n,k), &t_1\geq\dots t_n\geq k\geq n+1,
\end{align*}
and supposing that the archimedean component of the irreducible cuspidal automorphic representation of $G(\bA)$ is isomorphic to $\cD_{\ut}$, the holomorphic discrete series of weight $\ut$, then in order to study the critical value at $s_0=k-n$ or $s_0=n+1-k$ of the standard $L$-function, we picked an archimedean section (denoted as $f_{\kappa,\utau,\infty}$ in \cite{SLF})
$$
   f_{k,\ut}\in\Ind^{H(\bR)}_{Q_H(\bR)}\sgn^k|\cdot|^{k-\frac{2n+1}{2}}
$$
(see \S\ref{sec:fkt} for the precise definition). Let $v_{\ut}\in\cD_{\ut}$ be the highest weight vector inside the lowest $K_G$-type and $v^\vee_{\ut}$ be its dual vector in the contragredient representation.

\begin{thm}[Theorem~\ref{thm:Main}]
\begin{align*}
   Z_\infty\left(f_{k,\ut},v^\vee_{\ut},v_{\ut}\right)=\frac{i^{-\sum_{j=1}^n t_j+nk}2^{-2\sum_{j=1}^nt_j-nk+2n^2+2n}\pi^{-\sum_{j=1}^nt_j+nk+\frac{3n^2+n}{2}}}{\Gamma_{2n}(k)\,\dim\left(\GL(n),\ut\right)}\,\prod_{j=1}^n\Gamma(t_j-j+k-n).
\end{align*}
\end{thm}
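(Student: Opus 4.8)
The strategy I would pursue is the standard unfolding of the doubling integral combined with an explicit model for the holomorphic discrete series matrix coefficient. First I would write the section $f_{k,\ut}$ restricted to the doubling embedding $\iota^\diamond(g,1)$ in terms of coordinates on $G(\bR) = \Sp(2n,\bR)$, using the Iwasawa / Cartan-type decomposition adapted to $Q_H \cap \iota^\diamond(G\times G)$. Because $f_{k,\ut}$ lives in $\Ind^{H(\bR)}_{Q_H(\bR)}\sgn^k|\cdot|^{k-\frac{2n+1}{2}}$ and is built from the scalar Siegel section twisted by the archimedean differential operator of \cite{SLF}, its pullback along $\iota^\diamond(g,1)$ is essentially a power of $\det$ of an automorphy-factor-type matrix, decorated by a polynomial coming from the $\GL(n)$-type $\ut$. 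I would then pair this against the matrix coefficient $\langle \cD_{\ut}(g)v_{\ut}, v^\vee_{\ut}\rangle$, which for the highest weight vector in the lowest $K_G$-type is a completely explicit elementary function on the Siegel upper half space $\mathfrak{H}_n$ (a negative power of $\det(\text{Im})$ times a polynomial in the matrix entry, i.e. the reproducing kernel of the discrete series).

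The key computational step is to transport the integral over $G(\bR)$ to an integral over $\mathfrak{H}_n$ against the invariant measure $\det(Y)^{-(n+1)}\,dX\,dY$, where $Z = X + iY$. After doing so, the integrand becomes $\det(Y)^{a}$ times a polynomial times the matrix coefficient, and the integral factors as (i) a Gaussian-type integral over the ``real part'' variable $X$ producing a power of $\pi$ and a factor involving $\ut$ through the representation-theoretic dimension $\dim(\GL(n),\ut)$, and (ii) a matrix gamma integral over the space of positive-definite symmetric matrices $Y$. The latter is a Siegel/Hua integral: $\int_{Y>0}\det(Y)^{s}e^{-\Tr(Y)}\det(\text{something})\,dY$ evaluates to a ratio of products of $\Gamma_{2n}$ and shifted $\Gamma(t_j - j + k - n)$. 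I expect the precise bookkeeping of these two factors — matching the power of $2$, the power of $i$, and especially isolating $\dim(\GL(n),\ut)$ in the denominator — to be the main obstacle, since it requires a careful normalization of the differential operator, of the dual vector $v^\vee_{\ut}$, and of the Haar measure relative to the measure on $\mathfrak{H}_n$.

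To control factor (i) cleanly, I would use the fact that the polynomial part of $f_{k,\ut}$ and the polynomial part of the matrix coefficient are dual under the pairing $\langle P, Q\rangle = (P(\partial)\,\bar{Q})(0)$ on harmonic (or highest-weight) polynomials, so that integrating the product of a highest weight vector against its dual over the compact group $K_G$ — which is what the $X$-integral amounts to after a change of variables — collapses to the reciprocal of the formal degree contribution, i.e. to $1/\dim(\GL(n),\ut)$ up to an explicit archimedean constant. This is where I would invoke the results recalled in the excerpt: the differential operators of \cite{HaBun,SLF} are pinned down up to scalar, and the nonvanishing in \cite{HaSim,SLF} guarantees the constant I extract is the one appearing in the theorem. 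Finally I would assemble (i) and (ii), substitute $s_0 = k - n$ into all the shifted arguments, and check that the product over $j$ of $\Gamma(t_j - j + k - n)$ together with $\Gamma_{2n}(k)$ reproduces exactly the stated closed form; a consistency check in the scalar-weight case $t_1 = \dots = t_n = k$ against \cite{BS} would confirm the normalization.
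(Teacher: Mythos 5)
Your plan is essentially the classical direct unfolding over the Siegel upper half space (the route of Shimura and B\"ocherer--Schmidt in the scalar-weight case), and for general vector weight $\ut$ it has a genuine gap at its central step. The pullback $f_{k,\ut}\left(S^{-1}_H\iota(g,1)\right)$ is the scalar section decorated by the polynomial produced by $\fQ_{k,\ut}\!\left(\tfrac{\wh{\mu}^+_{H,0}}{4\pi i}\right)$, and this polynomial is \emph{not} explicitly known for vector weights; moreover it contains lower-degree (non-holomorphic, nearly holomorphic) correction terms. Your proposal gives no mechanism for showing that these corrections do not contribute when paired against the matrix coefficient $\left<g\cdot v^\vee_{\ut},v_{\ut}\right>$; asserting that the polynomial part of the section and the polynomial part of the matrix coefficient are ``dual under $\langle P,Q\rangle=(P(\partial)\bar Q)(0)$'' presupposes exactly the holomorphic-projection statement that has to be proved. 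In the paper this is the content of Step 2 of Proposition~\ref{prop:ZW}: using the theta-correspondence decomposition \eqref{eq:ThetaCor} one shows $\phi_{P^0_{k,\ut}}-\phi_{P^{\hol,\inv}_{k,\ut}}$ lies in a sum of $\cD_{\ut'}\boxtimes\cD_{\ut'}$ with $\sum t'_j<\sum t_j$, hence pairs to zero with the $\cD_{\ut}$-matrix coefficient. Without an analogue of this, your integral over $\mathfrak{H}_n$ cannot even be written down in a computable form for general $\ut$, so the issue is not ``bookkeeping'' of constants.

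The second concrete problem is your source of the factor $\dim\left(\GL(n),\ut\right)^{-1}$. The integral over the real part $X$ of $Z=X+iY$ is not a Gaussian integral and is not a disguised $K_G$-integration; conflating the two is where your factor (i) breaks down. In the paper the representation-theoretic constants arise quite differently: square-integrability of the discrete-series matrix coefficients and Schur orthogonality reduce everything to Harish-Chandra's formal degree $d(\cD_{\ut},dg)$ together with the inner product $\left<\phi_{Q_{k,\ut}},\phi_{\wt{Q}_{k,\ut}}\right>$ in the Schr\"odinger model (see \eqref{eq:WFD}); the factor $\dim\lambda(\ut,k)$ comes from comparing $P^{\hol}_{k,\ut}$ with the $\OO(2k)$-invariant $P^{\hol,\inv}_{k,\ut}$, and $\dim\left(\GL(n),\ut\right)^{-1}$ comes from comparing $Q_{k,\ut}\otimes\wt{Q}_{k,\ut}$ with the $\GL(n)$-invariant $\cI_{k,\ut}$ (Proposition~\ref{prop:WI}); only the final integral involving $\cI_{k,\ut}$ is an honest Gaussian computation (Proposition~\ref{prop:IGamma}). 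If you want to salvage a direct $\mathfrak{H}_n$-computation you would need, at minimum, an explicit closed form for the vector-valued holomorphic differential operator and a vector-valued Hua-type integral, neither of which your sketch supplies; otherwise you should reformulate the problem inside the Weil representation as the paper does.
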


Plugging this result into the interpolation formulas of our previously constructed $p$-adic $L$-functions, we verify that the archimedean factors in the interpolation formulas agree with the conjecture of Coates--Perrin-Riou \cite{CoPerrin,Coates}. 

Unfolding the definitions in \cite[\S5]{Coates} for our case, the modified archimedean Euler factor for $p$-adic interpolation of the critical values to the right (resp. left) of the center is defined as
\begin{align*}
   E^+_\infty(s,\cD_{\ut}\otimes\sgn^k)&=\,\gamma_\infty(s,\sgn^k)^{-1}\prod_{j=1}^n e^{-(s+t_j-j)\frac{\pi i}{2}}\Gamma_{\bC}(s+t_j-j)\\
   (\text{resp. }E^-_\infty(s,\cD_{\ut}\otimes\sgn^k)&=\,\prod_{j=1}^ne^{-(s+t_j-j)\frac{\pi i}{2}} \Gamma_{\bC}(s+t_j-j)).
\end{align*}
(See also \cite[\S1]{SLF} \cite[\S2.3]{LRtz} for the formulas of the modified Euler factor at $p$ for $p$-adic interpolation in our case.)

Let $\sC$ (resp. $\sC_P$) be a geometrically irreducible component of the spectrum of the Hecke algebra acting on ordinary (resp. $P$-ordinary) families of Siegel modular forms of genus $n$ and tame principal level $N$ as in \cite{SLF} (resp. \cite{LRtz}). Here $P\subset\GL(n)$ is a standard parabolic subgroup corresponding to a partition $n=n_1+\dots+n_d$. Let $\phi:\bQ^\times\backslash\bA^\times\ra\bC^\times$ be Dirichlet character with conductor dividing $N\geq 3$. In \cite[Theorem 1.0.1]{SLF} (resp. \cite[Theorem 2.6.2]{LRtz}), we constructed an $(n+1)$-variable $p$-adic $L$-function  $\mu_{\sC,\phi,\beta_1,\beta_2}$ (resp. a $(d+1)$-variable $p$-adic $L$ function $\mu_{\sC_P,\phi,\beta_1,\beta_2}$) associated to $\sC$ and $\phi$ (resp. $\sC_{P}$ and $\phi$) for a given pair of positive definite symmetric $n\times n$ matrices $(\beta_1,\beta_2)$ with entries in $\bQ$.

\begin{coro}
If the weight projection is {\'e}tale at the classical point $x\in\sC(\ol{\bQ}_p)$ (resp. $x\in\sC_P(\ol{\bQ}_p)$) with image $\utau_x\in \Hom_{\cont}\left(T_n(\bZ_p),\ol{\bQ}^\times_p\right)$ (resp. $\utau^P_x\in\Hom_{\cont}\left(T_P(\bZ_p),\ol{\bQ}^\times_p\right)$), and if the algebraic part $k$ of $\kappa$ and the algebraic part $\ut_x$ of $\utau_x$ (resp. $\ut^P_x$ of $\utau^P_x$) satisfy $t_{x,1}\geq\dots\geq t_{x,n}\geq k\geq n+1$ (resp. $t^P_{x,1}\geq\dots\geq t^P_{x,d}\geq k\geq n+1$), then
\begin{align*}
   &\left(\int_{\bZ^\times_p}\kappa\,d\mu_{\sC,\phi,\beta_1,\beta_2}\right)(x)\\
   =&\,i^{-\frac{3n^2+n}{2}}2^{\frac{n^2-3n}{2}}\phi(-1)^n\,\vol\left(\wh{\Gamma}(N)\right) \frac{p^{n^2}(p-1)^n}{\prod_{j=1}^n(p^{2j}-1)}\cdot \frac{2^{-\sum_{j=1}^n t_{x,j}}}{\dim\left(\GL(n),\ut_x\right)}\sum_{\varphi\in\fs_x}\frac{\fc(\beta_1,\varphi)\fc(\beta_2,eW(\varphi))}{\left<\varphi,\ol{\varphi}\right>}\\
	 &\times E^+_p(k-n,\pi_x\times\phi^{-1}\chi^{-1})\,E^+_\infty(k-n,\pi_x\times\phi^{-1}\chi^{-1})\, L^{Np\infty}(k-n,\pi_x\times\phi^{-1}\chi^{-1}),
\end{align*}
and resp.
\begin{align*}
   &\left(\int_{\bZ^\times_p}\kappa\,d\mu_{\sC_P,\phi,\beta_1,\beta_2}\right)(x)\\
   =&\,i^{\frac{n^2-n}{2}}2^{\frac{3n^2+3n}{2}}\phi(-1)^n\mr{vol}\left(\wh{\Gamma}(N)\right)\frac{\prod_{l=1}^d\prod_{j=1}^{n_l}(1-p^{-j})}{\prod_{j=1}^n(1-p^{-2j})} \cdot\frac{2^{-\sum_{j=1}^n t_{x,j}}}{\dim\left(\GL(n),\ut_x\right)}\sum_{\varphi\in \fs_x}\frac{\fc(\beta_1,\varphi)\fc(\beta_2,e_P\pW(\varphi))}{\left<\varphi,\ol{\varphi}\right>}\\
   &\times E^-_p(n+1-k,\pi_x\times\phi\chi)\,E^-_\infty(n+1-k,\pi_x\times\phi\chi)\, L^{Np\infty}(n+1-k,\pi_x\times\phi\chi).
\end{align*}
Here the finite set $\fs_x$ consists of an orthogonal basis of ordinary (resp. $P$-ordinary) holomorphic Siegel modular forms of genus $n$, weight $\ut_x$ (resp. $(\underbrace{t^P_{x,1},\dots, t^P_{x,1}}_{n_1},\underbrace{t^P_{x,2},\dots, t^P_{x,2}}_{n_2},\dots,\underbrace{t^P_{x,d},\dots, t^P_{x,d}}_{n_d})$) and tame principal level $N$ on which the Hecke algebra acts via the Hecke eigensystem parameterized by $x$. For $\beta_i$, $i=1,2$, $\fc(\cdot,\beta_i)$ denotes the $\beta_i$-th coefficient of the $q$-expansion. See \cite[Theorem 1.0.1]{SLF} (resp. \cite[Theorem 2.6.2]{LRtz}) for the definition of the Siegel modular form $eW(\varphi)$ (resp. $e_P\pW(\varphi)$).
\end{coro}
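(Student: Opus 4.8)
The plan is to derive the Corollary by substituting the evaluation of Theorem~\ref{thm:Main} into the interpolation formulas of \cite[Theorem~1.0.1]{SLF} and \cite[Theorem~2.6.2]{LRtz} and reorganizing the resulting product of $\Gamma$-functions and powers of $2$, $\pi$, $i$ into the Coates--Perrin-Riou factors $E^\pm_\infty$. Recall that at a classical point $x$ those theorems express $\left(\int_{\bZ^\times_p}\kappa\,d\mu_{\sC,\phi,\beta_1,\beta_2}\right)(x)$ (resp. its $\sC_P$ analogue) as the product of an elementary constant depending only on $n$, $N$, $\phi$ and $p$, the ratio $\sum_{\varphi\in\fs_x}\fc(\beta_1,\varphi)\fc(\beta_2,eW(\varphi))/\langle\varphi,\ol\varphi\rangle$, the modified $p$-adic Euler factor, the partial $L$-value, and the archimedean doubling zeta integral $Z_\infty(f_{k,\ut_x},v^\vee_{\ut_x},v_{\ut_x})$ left unevaluated. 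The étaleness of the weight projection together with the inequalities imposed on the weights guarantees that $\cD_{\ut_x}$ (with $\ut_x$, resp. its blocked version, in the range $t_{x,1}\geq\dots\geq t_{x,n}\geq k\geq n+1$) is the archimedean component of $\pi_x$ and that $s_0=k-n$ (resp. $s_0=n+1-k$) is a critical point, so that Theorem~\ref{thm:Main} applies verbatim to that integral.

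With $s_0=k-n$ one has $t_{x,j}-j+k-n=s_0+t_{x,j}-j$, so $\prod_{j=1}^n\Gamma(t_{x,j}-j+k-n)$ produced by Theorem~\ref{thm:Main} is exactly the $\Gamma$-part of $\prod_{j=1}^n\Gamma_{\bC}(s_0+t_{x,j}-j)$ appearing in $E^+_\infty$. To recover the remaining ingredients I would expand the Siegel factor $\Gamma_{2n}(k)$ as a power of $\pi$ times $\prod_{j=0}^{2n-1}\Gamma(k-\tfrac{j}{2})$ and apply the Legendre duplication formula to the pairs $\Gamma(k-i)\Gamma(k-i-\tfrac{1}{2})$, converting the half-integral shifts into explicit powers of $2$ and $\pi$; rewrite $\gamma_\infty(s_0,\sgn^k)$ in terms of $\Gamma_{\bR}$, whose shape depends on the parity of $k$; and let the symbolic factor $\dim(\GL(n),\ut_x)$ of Theorem~\ref{thm:Main} carry through to the denominator of the displayed constant. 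Matching the surviving $2$-, $\pi$- and $i$-powers against the factors $(2\pi)^{-(s_0+t_{x,j}-j)}$ and the phases $e^{-(s_0+t_{x,j}-j)\pi i/2}$ of $E^+_\infty$, the archimedean contribution reorganizes as an elementary constant times $E^+_\infty(s_0,\cD_{\ut_x}\otimes\sgn^k)$; folding that constant into the one already present in \cite[Theorem~1.0.1]{SLF} yields the displayed constant, leaving $E^+_p$ and $L^{Np\infty}$ untouched. The $\sC_P$ case runs in parallel with \cite[Theorem~2.6.2]{LRtz} in place of \cite[Theorem~1.0.1]{SLF}: the archimedean data entering that interpolation formula is the same section $f_{k,\ut_x}$ and matrix coefficient, so Theorem~\ref{thm:Main} again supplies the needed evaluation; the functional equation of the standard $L$-function, relating $L(k-n,\pi_x\times\phi^{-1}\chi^{-1})$ to $L(n+1-k,\pi_x\times\phi\chi)$, accounts for the passage to $s=n+1-k$ and is precisely why $E^-_\infty$ carries no $\gamma_\infty$-factor whereas $E^+_\infty$ does.

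The \emph{main obstacle} is the bookkeeping. One has to pin down all normalization conventions --- for $v_{\ut_x}$, $v^\vee_{\ut_x}$ and the matrix coefficient relative to the Petersson pairing $\langle\varphi,\ol\varphi\rangle$ of \cite{SLF,LRtz}, for $\Gamma_{\bC}$, $\Gamma_{\bR}$, $\Gamma_{2n}$ and $\gamma_\infty(\,\cdot\,,\sgn^k)$, and for the $\varepsilon$-factor at $\infty$ --- and then track the parity of $k$ (which toggles $\gamma_\infty(s,\sgn^k)$ between $\Gamma_{\bR}(s)$ and $\Gamma_{\bR}(s+1)$) together with the signs $\phi(-1)^n$ and the $i$-powers through the whole chain, checking in particular that the $i$-power and $2$-power discrepancy between the $\sC$ and $\sC_P$ constants matches the functional-equation sign. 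Once the conventions are fixed, the rest is a finite, idea-free computation.
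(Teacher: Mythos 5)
Your overall strategy---substitute the evaluation of Theorem~\ref{thm:Main} into the interpolation formulas of \cite[Theorem 1.0.1]{SLF} and \cite[Theorem 2.6.2]{LRtz} and reorganize the resulting $\Gamma$-, $2$-, $\pi$- and $i$-factors into the Coates--Perrin-Riou factors $E^\pm_\infty$---is exactly the paper's route for this corollary, and for the ordinary case ($\mu_{\sC,\phi,\beta_1,\beta_2}$, critical value at $s_0=k-n$) your account is essentially complete: the paper likewise treats this as a direct substitution followed by bookkeeping, and your identification of $\prod_j\Gamma(t_{x,j}-j+k-n)$ with the $\Gamma$-part of $\prod_j\Gamma_{\bC}(s_0+t_{x,j}-j)$ is the right matching.

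The gap is in the $P$-ordinary case. You attribute the passage to $s=n+1-k$ and the absence of the $\gamma_\infty$-factor in $E^-_\infty$ to the \emph{global} functional equation of the standard $L$-function. That is not the mechanism, and it cannot be: \cite[Theorem 2.6.2]{LRtz} already states its interpolation property directly in terms of $L^{Np\infty}(n+1-k,\pi_x\times\phi\chi)$, so there is no $L$-value left to reflect. What actually has to be supplied is the \emph{local} functional equation for the archimedean doubling zeta integral (Lapid--Rallis, in the explicit form \cite[(2.7.1)--(2.7.2)]{LRtz}): the archimedean integral occurring in \cite[Theorem 2.6.2]{LRtz} is the one attached to the section at the reflected point (the image of $f_{k,\ut_x}$ under the intertwining operator), and the local functional equation converts it into $Z_\infty\left(f_{k,\ut_x},v^\vee_{\ut_x},v_{\ut_x}\right)$---the quantity Theorem~\ref{thm:Main} computes---at the cost of a local $\gamma$-factor, which is precisely what cancels the $\gamma_\infty(s,\sgn^k)^{-1}$ and explains the differing $i$- and $2$-powers between the two displayed constants. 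Without this step, Theorem~\ref{thm:Main} does not "apply verbatim" to the integral appearing in the $\sC_P$ formula, and the global functional equation gives you no way to bridge that. The rest of your plan (the normalization bookkeeping, Legendre duplication for $\Gamma_{2n}(k)$, parity of $k$) is consistent with what the paper leaves implicit.
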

Note that in the above interpolation formulas, the second line aligns with the conjecture of Coates--Perrin-Riou and the first line is independent of the cyclotomic variable $\kappa$. The interpolation formula for $\mu_{\sC,\phi,\beta_1,\beta_2}$ follows directly from Theorem~\ref{thm:Main} and \cite[Theorem 1.0.1]{SLF}, and the one for $\mu_{\sC,\phi,\beta_1,\beta_2}$ follows from Theorem~\ref{thm:Main}, \cite[Theorem 2.6.2]{LRtz} plus the functional equation for the archimedean doubling zeta integrals \cite{LapidRallis}\cite[(2.7.1)(2.7.2)]{LRtz}. 

We end the introduction by sketching the idea of our computation of $Z_\infty\left(f_{k,\ut},v^\vee_{\ut},v_{\ut}\right)$. The section $f_{k,\ut}$ is constructed by applying differential operators to the canonical holomorphic section of scalar weight $k$ (defined as in \eqref{eq:cansec}). The restriction of $f_{k,\ut}$ to $G(\bR)\times G(\bR)$ can be expressed as a matrix coefficient of the Weil representation (of $\Sp(2n,\bR)\times\OO(2k,\bR)$). We work with the Schr{\"o}dinger model of the Weil representation. Denote by $M_{a,b}$ the space of $a\times b$ matrices. There exists a polynomial $P^0_{k,\ut}\in\bC[M_{2n,2k}]$ such that $f_{k,\ut}\left(S^{-1}_H\iota(g,1)\right)$ equals the matrix coefficient attached to the Schwartz function $\phi_{P^0_{k,\ut}}(X)=P^0_{k,\ut}(X)e^{-\pi\Tr X\ltrans{X}}$ on $M_{2n,2k}(\bR)=M_{n,2k}(\bR)\times M_{n,2k}(\bR)$ (Proposition~\ref{prop:fMC}). Let $\fH_{n,2k}\subset\bC[M_{n,2k}]$ be the space of pluri-harmonic polynomials as defined in \cite{KVWeil}. It is acted on by $\GL(n)$ through the left transpose translation and by $\OO(2k)$ acts through the right translation. Inside $\fH_{n,2k}\otimes\fH_{n,2k}$, there is a unique irreducible component (as an algebraic $\GL(n)\times \OO(2k)$-representation) on which the action of $\GL(n)$ has highest weight $\ut$. The polynomial $P^0_{k,\ut}$ has a projection $P^{\hol,\inv}_{k,\ut}$ into $V_{\fH,k,\ut}\otimes V_{\fH,k,\ut}$ (which corresponds to the holomorphic projection on the restriction to $G\times G$ of nearly holomorphic Siegel Eisenstein series on $H$). The Schwartz function $\phi_{P^{\hol,\inv}_{k,\ut}}(X)= P^{\hol,\inv}_{k,\ut}(X)e^{-\pi\Tr X\ltrans{X}}$ belongs to the unique direct summand of the $G(\bR)\times G(\bR)$-representation $\cS\left(M_{n,2k}(\bR)\times M_{n,2k}(\bR)\right)^{\OO(2k,\bR)}$ which is isomorphic to $\cD_{\ut}\boxtimes\cD_{\ut}$.  Taking into account Harish-Chandra's formulas on formal degrees of discrete series, we reduce to compute the inner product associated to $\phi_{P^{\hol,\inv}_{k,\ut}}$ via \eqref{eq:Spairing}. 

In general, it is very difficult to write down explicitly the polynomial $P^{\hol,\inv}_{k,\ut}$, but it can be characterized uniquely (up to scalar) inside $V_{\fH,k,\ut}\otimes V_{\fH,k,\ut}$ by its invariance under the diagonal action of $\OO(2k)$ and being of highest weight for the $\GL(n)\times\GL(n)$-action. We introduce another two polynomials $Q_{k,\ut}\otimes\wt{Q}_{k,\ut}$ and $\cI_{k,\ut}$ in $V_{\fH,k,\ut}\otimes V_{\fH,k,\ut}$. They are characterized (up to scalar) by that $Q_{k,t}$ is of highest weight for the action of both $\GL(n)$ and $\OO(2k)$, $\wt{Q}_{k,\ut}$ is of highest weight for the action of $\GL(2n)$ and lowest weight for that of $\OO(2k)$, and $\cI_{k,\ut}$ is invariant under $(\ba,\ltrans{\ba}^{-1})$, $\ba\in\GL(n)$, and is of highest-lowest weight for the action of $\OO(2k)\times\OO(2k)$. The way we compute the inner product of $\phi_{P^{\hol,\inv}_{k,\ut}}$ is to link it first to that of $\phi_{Q_{k,\ut}}\otimes\phi_{\wt{Q}_{k,\ut}}$, and then to that of $\phi_{\cI_{k,\ut}}$. Both $Q_{k,\ut}\otimes\wt{Q}_{k,\ut}$ and $\cI_{k,\ut}$ are very easy to be written down explicitly and the inner product associated to $\phi_{\cI_{k,\ut}}$ is particularly easy to compute. \\

\noindent
\textbf{Notation.} 
For a positive integer $m$, define the algebraic groups $\Sp(2m)$ and $\OO(2m)$ as
\begin{align*}
   \Sp(2m)=&\left\{g\in\GL(2m):\ltrans{g}\begin{pmatrix}0&\bid_m\\-\bid_m&0\end{pmatrix}g=\begin{pmatrix}0&\bid_m\\-\bid_m&0\end{pmatrix}\right\},\\
   \OO(2m)=&\left\{\gamma\in\GL(2m):\ltrans{\gamma}\gamma=\bid_{2m}\right\}.
\end{align*}
For a pair of positive integers $m_1,m_2$, we denote by $M_{m_1,m_2}$ the space of $m_1\times m_2$ matrices. 

Let $G=\Sp(2n)_{/\bQ}$ and $H=\Sp(4n)_{/\bQ}$, and we fix the embedding
\begin{align*}
   \iota:G\times G&\lhra H\\
	   \left(\begin{array}{cc}a_1&b_1\\c_1&d_1\end{array}\right)\times \left(\begin{array}{cc}a_2&b_2\\c_2&d_2\end{array}\right)&\longmapsto \left(\begin{array}{cccc}a_1&0&b_1&0\\0&a_2&0&b_2\\c_1&0&d_1&0\\0&c_2&0&d_2\end{array}\right).
\end{align*}
The standard Siegel parabolic subgroup $Q_H\subset H$ consists of elements whose lower left $2n\times 2n$-blocks are $0$, and the doubling Siegel parabolic subgroup $Q^\diamond_H\subset H$ is defined as
\begin{align*}
   Q^\diamond_H&=S_HQ_HS^{-1}_H, &S_H=\begin{pmatrix}\bid_n&0&0&0\\0&\bid_n&0&0\\0&\bid_n&\bid_n&0\\\bid_n&0&0&\bid_n\end{pmatrix}.
\end{align*}
Denote by $K_G$ the maximal compact subgroup of $G(\bR)$ which we identify with $\U(n,\bR)=\{g\in\GL(n,\bC):\ltrans{\ol{g}}g=\bid_n\}$ via $\begin{pmatrix}a&b\\-b&a\end{pmatrix}\mapsto ai+b$.
We fix the Haar measure on $G(\bR)$ as the product measure where the one on $K_G$ has total volume $1$ and the one on $G(\bR)/K_G\cong\bH_n=\{z=x+iy\in M_{n,n}(\bC),\,\ltrans{z}=z,\,y>0\} $ is $\det(y)^{-n-1}\prod\limits_{1\leq i\leq j\leq n}dx_{ij}\,dy_{ij}$

\begin{Ac}
I would like to thank Paul Garrett, Michael Harris, Dongwen Liu, Binyong Sun and Eric Urban for many useful communications and insightful suggestions.
\end{Ac}

\section{Reducing to a simpler integral}

\subsection{The archimedean section in the doubling zeta integral}\label{sec:fkt}
We first recall the choice in \cite{SLF} of the section $f_{k,\ut}$ inside the degenerate principal series 
$$I(k)=\Ind^{H(\bR)}_{Q_H(\bR)}\sgn^k|\cdot|^{k-\frac{2n+1}{2}}=\left\{f:H(\bR)\ra\bC\text{ smooth} :\,f\left(\begin{pmatrix}A&B\\0&\ltrans{A}^{-1}\end{pmatrix}h\right)=(\det A)^k f(h)\right\}.$$
Inside $I(k)$, there is the canonical holomorphic section $f_{k,k}$ defined as
\begin{equation}\label{eq:cansec}
   f_{k,k}\left(\begin{pmatrix}A&B\\C&D\end{pmatrix}\right)=\det(Ci+D)^{-k}.
\end{equation}

The torus $\bC^\times$ acts on $H(\bR)$ by
\begin{align*}
   z\cdot h&=\begin{pmatrix}u\bid_{2n}&v\bid_{2n}\\-v\bid_{2n}&u\bid_{2n}\end{pmatrix}h\begin{pmatrix}u\bid_{2n}&v\bid_{2n}\\-v\bid_{2n}&u\bid_{2n}\end{pmatrix}^{-1},&z=u+iv\in\bC^\times.
\end{align*}
The induced action on $(\Lie H)_{\bC}$ decomposes it as
\begin{equation*}
   (\Lie H)_{\bC}=(\Lie H)^{-1,1}_{\bC}\oplus (\Lie H)^{0,0}_{\bC}\oplus (\Lie H)^{1,-1}_{\bC},
\end{equation*}
where $(\Lie H)^{a,b}_{\bC}$, $a,b=0,\pm 1$, is the subspace on which $z\in\bC^\times$ acts by the scalar $z^{-a}\ol{z}^{-b}$. Let $\fq^+_{H}=(\Lie H)^{-1,1}_{\bC}$ and we fix the following basis of it
\begin{align*}
   \wh{\mu}^+_{H,ij}=\wh{\mu}^+_{H,ji}&=\fc\begin{pmatrix}0&E_{ij}+E_{ji}\\0&0\end{pmatrix}\fc^{-1},&1\leq i,j\leq 2n,
\end{align*}
where $\fc=\frac{1}{\sqrt{2}}\begin{pmatrix}\bid_{2n}&i\bid_{2n}\\i\bid_{2n}&\bid_{2n}\end{pmatrix}$ and $E_{ij}$ is the $2n\times 2n$ matrix with $1$ in the $(i,j)$ entry and $0$ elsewhere. Define the following matrices with entries in $\fq^+_H$,
\begin{align*}
   \wh{\mu}^+_H=\begin{blockarray}{ccc}n&n&\\\begin{block}{(cc)c}\wh{\mu}^+_{H,1}&\wh{\mu}^+_{H,0}&n\\\ltrans{\wh{\mu}}^+_{H,0}&\wh{\mu}^+_{H,2}&n\\\end{block}\end{blockarray}=\left(\wh{\mu}^+_{ij}\right)_{1\leq i, j\leq 2n}.
\end{align*}
Explicit formulas on the equivalence between the action of $\fq^+_H$ and the Maass--Shimura differential operators are given in \cite[\S2.4]{SLF}. 

For $(k,\ut)$ with $t_1\geq\dots t_n\geq k$, define the following polynomial  on entries of $n\times n$ matrices,
$$
   \fQ_{k,\ut}=\prod_{j=1}^{n-1}\ddet^{t_j-t_{j+1}}_j\, \ddet^{k-t_n},
$$ 
where $\det_j$ stands for the determinant of the upper left $j\times j$ block. The section $f_{k,\ut}\in I(k)$ is defined as
\begin{equation}\label{eq:ourf}
   f_{k,\ut}=\fQ_{k,\ut}\left(\frac{\wh{\mu}^+_{H,0}}{4\pi i}\right)\cdot f_k.
\end{equation}

Our goal is to compute the following archimedean zeta integral which appears in the interpolation formulas of the $p$-adic $L$-functions for Siegel modular forms \cite[Theorem 1.0.1]{SLF},
\begin{equation}\label{eq:Zinfty}
   Z_\infty\left(f_{k,\ut},v^\vee_{\ut},v_{\ut}\right)=\int_{G(\bR)}f_{k,\ut}\left(S^{-1}_H\iota(g,1)\right)\left<g\cdot v^\vee_{\ut},v_{\ut}\right>\,dg,
\end{equation}
where $v_{\ut}$ is the highest weight vector inside the lowest $K_G$-type of the weight $\ut$ holomorphic discrete series $\cD_{\ut}$ of $G(\bR)$, and $v^\vee_{\ut}$ is the dual vector of $v_{\ut}$ inside the contragredient representation of $\cD_{\ut}$. The pairing $\left<\,,\,\right>$ appearing in the above identity is the natural pairing between $\cD_{\ut}$ and $\cD^\vee_{\ut}$ under which $\left<v^\vee_{\ut},v_{\ut}\right>=1$. The convergence and nonvanishing of the integral \eqref{eq:Zinfty} have been proved in \cite[Proposition 4.3.1]{SLF} using ideas from \cite{JV,HaBun,LiThetaCoh}.

\subsection{Basics on Weil representation}\label{sec:weil}
We recall some formulas on the Schr{\" o}dinger model of Weil representation which will be used in our computation of \eqref{eq:Zinfty}. Let $m$ be a positive integer and $\OO(2k,\bR)$ be the definite orthogonal group attached to the symmetric form $(x,y)=\sum_{j=1}^{2k} x_jy_j$ on $\bR^{2k}$. Denote by $\omega_{2k}$ the Weil representation of $\Sp(2m,\bR)\times \OO(2k,\bR)$ on $\cS(M_{m,2k}(\bR))$, the space of $\bC$-valued Schwartz functions on $M_{m,2k}(\bR)$. 

For $\phi\in\cS(M_{m,2k}(\bR))$ and $\gamma\in\OO(2k,\bR)$, $\begin{pmatrix}a&b\\0&\ltrans{a}^{-1}\end{pmatrix},\begin{pmatrix}0&\bid_m\\-\bid_m&0\end{pmatrix}\in\Sp(2m,\bR)$, we have
\begin{align*}
   \omega_{2k}(1,\gamma)\phi(x)&=\phi(x\gamma),\\
   \omega_{2k}\left(\begin{pmatrix}a&b\\0&\ltrans{a}^{-1}\end{pmatrix},1\right)\phi(x)&=(\det a)^k\,e^{\pi i\,\Tr b\ltrans{a}\, x\ltrans{x}}\phi(\ltrans{a}x),\\
   \omega_{2k}\left(\begin{pmatrix}0&\bid_m\\-\bid_m&0\end{pmatrix},1\right)\phi(x)&=i^{-mk}\int_{M_{m,2k}(\bR)}\phi(y)\,e^{2\pi i\,\Tr x\ltrans{y}}\,dy.
\end{align*}
More generally, for $g=\begin{pmatrix}a&b\\c&d\end{pmatrix}\in\Sp(2m,\bR)$,
\begin{equation}\label{eq:weil}
   \omega_{2k}\left(\begin{pmatrix}a&b\\c&d\end{pmatrix},1\right)\phi(x)=\epsilon(g)^k i^{-k\,\mr{rank}\, c}\int_{(\bR^m/\ker c)\otimes \bR^{2k}}\phi(\ltrans{c}y+\ltrans{a}x)\,e^{2\pi i\,\left(\frac{1}{2}\Tr c\ltrans{d}\,y\ltrans{y}+\Tr\,c\ltrans{b}\,x\ltrans{y}+\frac{1}{2}\Tr a\ltrans{b}\,x\ltrans{x}\right)}\,dy,
\end{equation}
where $\epsilon(g)=\sgn(\det a_1a_2)$ for 
\begin{align*}
   g&=\begin{pmatrix}a_1&b_1\\0&\ltrans{a}^{-1}_1\end{pmatrix}\begin{pmatrix}\bid_{m-j}&0&0&0\\0&0&0&\bid_j\\0&0&\bid_{m-j}&0\\0&-\bid_j&0&0\end{pmatrix}\begin{pmatrix}a_2&b_2\\0&\ltrans{a}^{-2}_1\end{pmatrix}, &j=\mr{rank}\,c.
\end{align*}

\subsection{Siegel--Weil sections and matrix coefficients}
The Siegel--Weil sections inside $I(k)$ are images of the map
\begin{align*}
   f_{\mr{SW}}:\cS(M_{2n,2k}(\bR))&\lra I(k)\\
    \phi&\longmapsto f_{\mr{SW}}(h)= \omega_{2k}(h,1)\phi(0).
\end{align*}
The canonical holomorphic section \eqref{eq:cansec} is the image of the Gaussian function, {\it i.e.}
$$
   f_{k,k}=f_{\mr{SW}}\left(X\mapsto e^{-\pi\,\Tr X\ltrans{X}}\right).
$$

Multiplying with the Gaussian function defines an embedding from the space of polynomial functions into the space of Schwartz functions. For a positive integer $m$, we define
\begin{equation}\label{eq:mPS}
\begin{aligned}
   \bC[M_{m,2k}]&\lhra \cS(M_{m,2k}(\bR))\\
   P(x)&\longmapsto \phi_P(x)=P(x)\, e^{-\pi\,\Tr x\ltrans{x}}.
\end{aligned}
\end{equation}
Later we will mainly focus on the image of \eqref{eq:mPS} for $m=n,2n$ which is dense inside $\cS(M_{m,2k}(\bR))$ and stable under the action of $(\Lie \Sp(2m))_{\bC}\times\OO(2k,\bR)$. In particular, our chosen section $f_{k,\ut}$ in \eqref{eq:ourf} belongs to the image of the composition of $f_{\mr{SW}}$ and \eqref{eq:mPS} with $m=2n$. 

Given $\sigma\in\Sym(2n,\bR)$, set
$$
   \wh{\mu}^+_{H,\sigma}=\fc\begin{pmatrix}0&\sigma\\0&0\end{pmatrix}\fc^{-1}.
$$
If $P(X)$ is a homogeneous polynomial on $M_{2n,2k}$ of degree $d$, an easy computation using the formulas in \S\ref{sec:weil} gives 
\begin{equation}\label{eq:mu-sigma}
   \omega_{2k}\left(\wh{\mu}^+_{H,\sigma},1\right) \left(\phi_P(X)\right)=2\pi i\,\left(\Tr\sigma X\ltrans{X}\right)\Big(P(X)+\text{polymonial in $X$ of degree $<d$}\Big)e^{-\pi\,\Tr X\ltrans{X}},
\end{equation}
from which it follows:

\begin{prop}\label{prop:p0}
There exists a polynomial function $P^0_{k,\ut}$ on $M_{2n,2k}(\bR)$ of the form
$$
   P^0_{k,\ut}\left(X=\begin{pmatrix}x_1\\x_2\end{pmatrix}\right)=\fQ_{k,\ut}(x_1\ltrans{x}_2)+\textnormal{polynomial in $X$ of degree $< 2\sum t_j-2nk$}
$$
such that
$$
  f_{k,\ut}=2^{-\sum t_j+nk}\,f_{\mr{SW}}\left(\phi_{P^0_{k,\ut}}\right).
$$
\end{prop}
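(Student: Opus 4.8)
The plan is to move the identity into the Schr\"odinger model of the Weil representation, where $f_{k,\ut}$ becomes a differential operator from $U(\fq^+_H)$ applied to the Gaussian, and then to read everything off from \eqref{eq:mu-sigma}. First I would use that $f_{k,k}=f_{\mr{SW}}(\phi_1)$ with $\phi_1(X)=e^{-\pi\,\Tr X\ltrans{X}}$, together with the fact---immediate from $f_{\mr{SW}}(\phi)(h)=\omega_{2k}(h,1)\phi(0)$---that $f_{\mr{SW}}$ intertwines $\omega_{2k}(\cdot,1)$ with the right translation action on $I(k)$; differentiating, this intertwining extends to $(\Lie H)_{\bC}$, hence to $U\bigl((\Lie H)_{\bC}\bigr)$. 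Since $\fq^+_H=(\Lie H)^{-1,1}_{\bC}$ is abelian (the matrices $\fc\begin{psm}0&\sigma\\0&0\end{psm}\fc^{-1}$ visibly commute), the operator $\fQ_{k,\ut}\bigl(\tfrac{\wh{\mu}^+_{H,0}}{4\pi i}\bigr)$ is a well-defined element of $U(\fq^+_H)$, and therefore
\[
   f_{k,\ut}=\fQ_{k,\ut}\Bigl(\tfrac{\wh{\mu}^+_{H,0}}{4\pi i}\Bigr)\cdot f_{k,k}=f_{\mr{SW}}(\Psi),\qquad \Psi:=\omega_{2k}\Bigl(\fQ_{k,\ut}\bigl(\tfrac{\wh{\mu}^+_{H,0}}{4\pi i}\bigr),1\Bigr)\phi_1.
\]
So it remains to show that $\Psi=2^{-\sum t_j+nk}\,\phi_{P^0_{k,\ut}}$ for some $P^0_{k,\ut}$ of the asserted form; in particular $\Psi$ should land in the image of \eqref{eq:mPS} with $m=2n$.

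To compute $\Psi$ I would iterate \eqref{eq:mu-sigma}. The $(i,j)$-entry of $\wh{\mu}^+_{H,0}$ is $\wh{\mu}^+_{H,\sigma_{ij}}$ with $\sigma_{ij}=E_{i,n+j}+E_{n+j,i}\in\Sym(2n,\bR)$, and since $X\ltrans{X}$ is symmetric one has $\Tr(\sigma_{ij}X\ltrans{X})=2(X\ltrans{X})_{i,n+j}=2(x_1\ltrans{x}_2)_{ij}$. The polynomial $\fQ_{k,\ut}$ is homogeneous of degree $D:=\sum_j t_j-nk$ as a polynomial in the entries of an $n\times n$ matrix. Using that the operators $\wh{\mu}^+_{H,\sigma_{ij}}$ pairwise commute, that $\omega_{2k}$ sends a product to the corresponding composition, and that---by \eqref{eq:mu-sigma}---each application of $\wh{\mu}^+_{H,\sigma_{ij}}$ multiplies the top-degree part of a polynomial by a constant multiple of $(x_1\ltrans{x}_2)_{ij}$ while the correction terms it creates have strictly smaller leading degree, an induction on $D$ gives
\[
   \Psi=c\,\Bigl(\fQ_{k,\ut}(x_1\ltrans{x}_2)+\text{polynomial in $X$ of degree $<2D$}\Bigr)\,e^{-\pi\,\Tr X\ltrans{X}}
\]
for an explicit constant $c$, which is a power of $2$. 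Since every $(x_1\ltrans{x}_2)_{ij}$ is a degree-$2$ polynomial in $X$, the bracketed expression is a genuine polynomial, so $\Psi$ lies in the image of \eqref{eq:mPS}; moreover $2D=2\sum_j t_j-2nk$, which is precisely the degree bound in the statement. Writing $\Psi=c\,\phi_{P^0_{k,\ut}}$ with $P^0_{k,\ut}$ the bracketed polynomial above and tracing through the normalizations, one finds $c=2^{-\sum t_j+nk}$, so that $f_{k,\ut}=f_{\mr{SW}}(\Psi)=c\,f_{\mr{SW}}(\phi_{P^0_{k,\ut}})$, as asserted.

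The step I expect to require the most care is the bookkeeping of the scalar $c$: one must combine the normalizing constant appearing in \eqref{eq:mu-sigma}, the factor $2$ relating $\Tr(\sigma_{ij}X\ltrans{X})$ to $(x_1\ltrans{x}_2)_{ij}$, and the $4\pi i$ built into \eqref{eq:ourf}, and verify that across the degree-$D$ iteration they collapse to the single power $2^{-\sum t_j+nk}$. A secondary and routine point is to arrange the induction for $\Psi$ so that the bound on the degree of the error term stays uniform: after an application of $\wh{\mu}^+_{H,\sigma_{ij}}$ the newly created lower-order terms have leading degree strictly below that of the main term, while a further application of some $\wh{\mu}^+_{H,\sigma_{i'j'}}$ raises all degrees by exactly $2$, so the error never catches up. Beyond those two points the argument is a leading-symbol computation for $U(\fq^+_H)$ acting on the Gaussian, of the kind recorded in \eqref{eq:mu-sigma}.
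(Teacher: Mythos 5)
Your structural reduction is exactly the route the paper intends: Proposition~\ref{prop:p0} is stated as an immediate consequence of \eqref{eq:mu-sigma}, and your steps (the intertwining property of $f_{\mr{SW}}$ with right translation, $f_{k,k}=f_{\mr{SW}}$ of the Gaussian, commutativity of $\fq^+_H$, and the entry-by-entry iteration of \eqref{eq:mu-sigma} with $\Tr(\sigma_{ij}X\ltrans{X})=2(x_1\ltrans{x}_2)_{ij}$, together with the degree bookkeeping giving the bound $2\sum t_j-2nk$) are the correct fleshing-out of "from which it follows."

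The genuine gap is the scalar, which is the one quantitative assertion of the proposition and the one you yourself flag as delicate but then merely assert. By your own accounting, each application of an entry of $\fQ_{k,\ut}\bigl(\wh{\mu}^+_{H,0}/4\pi i\bigr)$ contributes $\tfrac{1}{4\pi i}$ from \eqref{eq:ourf}, $2\pi i$ from \eqref{eq:mu-sigma}, and the factor $2$ from $\Tr(\sigma_{ij}X\ltrans{X})=2(x_1\ltrans{x}_2)_{ij}$; these multiply to $1$, so iterating $D=\sum_j t_j-nk$ times your argument as written produces $c=1$, i.e. $f_{k,\ut}=f_{\mr{SW}}\bigl(\phi_{P^0_{k,\ut}}\bigr)$ with no power of $2$, not $c=2^{-\sum t_j+nk}$. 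To obtain the stated constant you must exhibit an extra factor $\tfrac12$ for each of the $D$ applications, and it cannot come from the three ingredients you list; it has to come from the normalization implicit in how $\fQ_{k,\ut}$ is evaluated on the symmetric matrix of operators $\wh{\mu}^+_{H,0}$, i.e. from the conventions relating $\wh{\mu}^+_{H,ij}=\fc\begin{psm}0&E_{ij}+E_{ji}\\0&0\end{psm}\fc^{-1}$ to the Maass--Shimura operators in \cite[\S 2.4]{SLF} (where off-diagonal entries of a symmetric matrix of derivatives are normalized with a factor $\tfrac12$). Until you pin down that per-entry normalization and show it supplies exactly $2^{-1}$ per application, the constant in the proposition --- which propagates directly into Proposition~\ref{prop:ZW} and Theorem~\ref{thm:Main} --- is not proved; writing "tracing through the normalizations, one finds $c=2^{-\sum t_j+nk}$" is precisely the step that your listed normalizations contradict rather than establish.
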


Let $\cS(M_{n,2k}(\bR))^\mvw$ be the $G(\bR)\times\OO(2k,\bR)$-representation which equals $\cS(M_{n,2k}(\bR))$ as a $\bC$-vector space and on which $(g,\gamma)\in G(\bR)\times\OO(2k,\bR)$ acts through $\omega_{2k}(g^\mvw,\gamma)$. Here the superscript $^\mvw$ denotes the conjugation by $\begin{pmatrix}0&\bid_n\\\bid_n&0\end{pmatrix}$ (which is also called the MVW involution on $G$ and is introduced in \cite{MVW}). 

As $G(\bR)\times\OO(2k,\bR)$-representations, $\cS(M_{n,2k}(\bR))^\mvw$ is isomorphic to the contragredient representation of $\cS(M_{n,2k}(\bR))$, and we have the following $G(\bR)\times\OO(2k,\bR)$-invariant $\bC$-linear pairing
\begin{equation}\label{eq:Spairing}
\begin{aligned}
   \left<\,,\,\right>:\cS(M_{n,2k}(\bR))\times \cS(M_{n,2k}(\bR))^\mvw&\lra \bC\\
   (\phi_1,\phi_2)&\longmapsto \left<\phi_1,\phi_2\right>=\int_{M_{n,2k}(\bR)}\phi_1(x)\,\omega_{2k}\left(\begin{pmatrix}0&-\bid_n\\\bid_n&0\end{pmatrix},1\right)\phi_2(x)\,dx.
\end{aligned}
\end{equation}

Suppose that $\phi\in\cS(M_{2n,2k}(\bR))$ has the form $\phi\begin{pmatrix}x_1\\x_2\end{pmatrix}=\phi_1(x_1)\,\phi_2(x_2)$ with $\phi_1,\phi_2\in\cS(M_{n,2k}(\bR))$. Direct computation using \eqref{eq:weil} shows that
\begin{equation}\label{eq:resmc}
   f_{\mr{SW}}(\phi)\left(S^{-1}_H\iota(g,1)\right)=i^{nk}\left<\omega_{2k}(g,1)\phi_1,\phi_2\right>.
\end{equation}   
(The $G(\bR)$-invariance of the pairing $\left<\,,\,\right>$ is also reflected by the invariance property  
$$f_{\mr{SW}}\left(S^{-1}_H\iota(g_1,g_2)\right)=f_{\mr{SW}}\left(S^{-1}_H\iota (g g_1,g^\mvw g_2)\right)$$
of Siegel--Weil sections.)

For each $g\in G(\bR)$, we can define a linear functional $\MC_{\omega,2k}(g,\cdot)$ on $\cS(M_{2n,2k}(\bR))$ by taking the matrix coefficients of the Weil representation on $\cS(M_{n,2k}(\bR))\times \cS(M_{n,2k}(\bR))^\mvw$. More precisely, we define it as
\begin{align*}
   \MC_{\omega,2k}(g,\cdot):\cS(M_{2n,2k}(\bR))&\lra \bC\\
   \phi&\longmapsto \MC_{\omega,2k}(g,\phi)=\int_{M_{n,2k}(\bR)}\omega_{2k}\left(\iota\left(1,\begin{psm}0&-\bid_n\\\bid_n&0\end{psm}\right),1\right)\phi\begin{pmatrix}x\\x\end{pmatrix}\,dx.
\end{align*}
Combining Proposition~\ref{prop:p0} and \eqref{eq:resmc} we immediately get
\begin{prop}\label{prop:fMC}
\begin{equation*}
   f_{k,\ut}\left(S^{-1}_H\iota(g,1)\right)=i^{nk}\,2^{-\sum t_j+nk}\,\MC_{\omega,2k}\left(g,\phi_{P^0_{k,\ut}}\right).
\end{equation*}
\end{prop}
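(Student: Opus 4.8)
The plan is to track the single section $f_{k,\ut}$ through the two identifications packaged in Proposition~\ref{prop:p0} and \eqref{eq:resmc}. First I would recall from Proposition~\ref{prop:p0} that $f_{k,\ut}=2^{-\sum t_j+nk}f_{\mathrm{SW}}(\phi_{P^0_{k,\ut}})$, so that evaluating $f_{k,\ut}$ at $S^{-1}_H\iota(g,1)$ is the same as evaluating $f_{\mathrm{SW}}(\phi_{P^0_{k,\ut}})$ there, up to the scalar $2^{-\sum t_j+nk}$. This reduces everything to understanding $f_{\mathrm{SW}}(\phi)(S^{-1}_H\iota(g,1))$ for an arbitrary Schwartz function $\phi$ on $M_{2n,2k}(\bR)$, which is exactly the content needed for the next step.

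Second, I would invoke \eqref{eq:resmc}, which handles the case $\phi\bigl(\begin{smallmatrix}x_1\\x_2\end{smallmatrix}\bigr)=\phi_1(x_1)\phi_2(x_2)$ of a pure tensor, giving $f_{\mathrm{SW}}(\phi)(S^{-1}_H\iota(g,1))=i^{nk}\langle\omega_{2k}(g,1)\phi_1,\phi_2\rangle$ with the pairing \eqref{eq:Spairing}. The point is that the definition of $\MC_{\omega,2k}(g,\cdot)$ is engineered precisely so that, on a pure tensor $\phi=\phi_1\otimes\phi_2$, one has $\MC_{\omega,2k}(g,\phi)=\langle\omega_{2k}(g,1)\phi_1,\phi_2\rangle$: indeed, by definition $\MC_{\omega,2k}(g,\phi)=\int_{M_{n,2k}(\bR)}\omega_{2k}(\iota(1,\begin{smallmatrix}0&-\bid_n\\\bid_n&0\end{smallmatrix}),1)\phi\bigl(\begin{smallmatrix}x\\x\end{smallmatrix}\bigr)\,dx$, and the operator in the second slot acts only on the $\phi_2$-variable via $\omega_{2k}(\begin{smallmatrix}0&-\bid_n\\\bid_n&0\end{smallmatrix},1)$ while the $g$ in the first slot must be inserted to match the left-hand side; after unwinding, this coincides with $\int_{M_{n,2k}(\bR)}\bigl(\omega_{2k}(g,1)\phi_1\bigr)(x)\,\bigl(\omega_{2k}(\begin{smallmatrix}0&-\bid_n\\\bid_n&0\end{smallmatrix},1)\phi_2\bigr)(x)\,dx=\langle\omega_{2k}(g,1)\phi_1,\phi_2\rangle$. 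Hence on pure tensors the three quantities $f_{\mathrm{SW}}(\phi)(S^{-1}_H\iota(g,1))$, $i^{nk}\langle\omega_{2k}(g,1)\phi_1,\phi_2\rangle$, and $i^{nk}\MC_{\omega,2k}(g,\phi)$ all agree.

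Third, to pass from pure tensors to the general $\phi_{P^0_{k,\ut}}$, I would note that both sides $\phi\mapsto f_{\mathrm{SW}}(\phi)(S^{-1}_H\iota(g,1))$ and $\phi\mapsto i^{nk}\MC_{\omega,2k}(g,\phi)$ are continuous $\bC$-linear functionals on $\cS(M_{2n,2k}(\bR))$; since they agree on the dense subspace spanned by pure tensors $\phi_1\otimes\phi_2$, they agree everywhere, in particular on $\phi_{P^0_{k,\ut}}$. (Alternatively, $P^0_{k,\ut}$ is an explicit polynomial, hence a finite sum of pure tensors under the isomorphism $\bC[M_{2n,2k}]\cong\bC[M_{n,2k}]\otimes\bC[M_{n,2k}]$ compatible with \eqref{eq:mPS}, so linearity alone suffices without any density argument.) Combining with the scalar from Proposition~\ref{prop:p0} gives $f_{k,\ut}(S^{-1}_H\iota(g,1))=i^{nk}\,2^{-\sum t_j+nk}\,\MC_{\omega,2k}(g,\phi_{P^0_{k,\ut}})$, which is the claim.

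The only mild obstacle is bookkeeping in the third step: one must make sure the decomposition of $\phi_{P^0_{k,\ut}}$ into pure tensors is compatible with how $f_{\mathrm{SW}}$ and $\MC_{\omega,2k}$ were defined — in particular that the Gaussian factor $e^{-\pi\Tr X\ltrans{X}}=e^{-\pi\Tr x_1\ltrans{x_1}}e^{-\pi\Tr x_2\ltrans{x_2}}$ splits correctly across the two $M_{n,2k}$-blocks, so that $\phi_{P}$ for $P=Q_1\otimes Q_2$ equals $\phi_{Q_1}\otimes\phi_{Q_2}$. This is immediate from the block form $X=\bigl(\begin{smallmatrix}x_1\\x_2\end{smallmatrix}\bigr)$ and the definition \eqref{eq:mPS}, so no real difficulty arises; the proposition is essentially a formal concatenation of Proposition~\ref{prop:p0}, identity \eqref{eq:resmc}, and the definition of $\MC_{\omega,2k}$.
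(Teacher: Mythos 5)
Your proposal is correct and is exactly the paper's argument: the paper proves Proposition~\ref{prop:fMC} by simply combining Proposition~\ref{prop:p0} with \eqref{eq:resmc}, and your extra care about extending \eqref{eq:resmc} from pure tensors to $\phi_{P^0_{k,\ut}}$ by linearity (noting that the Gaussian splits across the two blocks) just makes explicit what the paper leaves implicit.
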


\subsection{Pluri-harmonic polynomials}
In \cite{KVWeil}, pluri-harmonic polynomials are introduced to study the theta correspondence between $\Sp(2m,\bR)$ and $\OO(2k,\bR)$. The image of pluri-harmonic polynomials by the map \eqref{eq:mPS} corresponds to lowest $K$-types in the holomorphic discrete series appearing in the decomposition of the Weil representation. 

\begin{defn}
A polynomial $P(x)$ in $\bC[M_{n,2k}]$ is called pluri-harmonic if 
$$
   \Delta_{ij}P(x)=\sum_{r=1}^{2k}\frac{\partial^2}{\partial x_{ir}\partial x_{jr}}P(x)=0.
$$
for all $1\leq i,j\leq n$. We denote the space of pluri-harmonic polynomials on $M_{n,2k}$ as $\fH_{n,2k}$.
\end{defn}

For
\begin{align*}
   x&=\begin{blockarray}{ccc}k&k&\\\begin{block}{(cc)c}u&v&n\\\end{block}\end{blockarray}\in M_{n,2k}(\bR),
\end{align*}
define
\begin{align*}
   \bz(x)&=u+iv\in M_{n,k}(\bC),&\ol{\bz}(x)&=u-iv\in M_{n,k}(\bC).
\end{align*}
Given $t_1\geq\dots\geq t_n\geq k$, define $Q_{k,\ut}$, $\wt{Q}_{k,\ut}\in\bC[M_{n,2k}]$ as
\begin{align*}
   Q_{k,\ut}(x)&=\fQ_{k,\ut}\left((\bz(x)_{ij})_{1\leq i,j\leq n}\right), &\wt{Q}_{k,\ut}(x)&=\fQ_{k,\ut}\left((\ol{\bz}(x)_{ij})_{1\leq i,j\leq n}\right),
\end{align*}
which are easily seen both pluri-harmonic.

The group $\GL(n,\bC)$ (resp. $\OO(2k,\bC)$) acts on $\bC[M_{n,2k}]$ by left transpose translation (resp. right translation). The pluri-harmonic polynomial $Q_{k,\ut}$ generates an irreducible algebraic $\GL(n,\bC)\times \OO(2k,\bC)$-representation $V_{\fH,k,\ut}$  inside $\fH_{n,2k}$, and we have 
\begin{equation*}
   V_{\fH,k,\ut}\cong(\ut-k)\boxtimes \lambda(\ut,k),
\end{equation*}
where $\ut-k$ stands for the irreducible algebraic representation of $\GL(n)$ of highest weight $(t_1-k,t_2-k,\dots,t_n-k)$. The restriction to $\SO(2k,\bC)$ of the irreducible algebraic $\OO(2k,\bC)$-representation  $\lambda(\ut,k)$ is still irreducible, and its highest weight is
$$
   (t_1-k,t_2-k,\dots,t_n-k,0,\dots,0).
$$   
The pluri-harmonic polynomial $\wt{Q}_{k,\ut}$ also generates $V_{\fH,k,\ut}$. Both $Q_{k,\ut}$ and $\wt{Q}_{k,\ut}$ are of the highest weight for the action of $\GL(n,\bC)$, and $Q_{k,\ut}$ (resp. $\wt{Q}_{k,\ut}$) is of the highest (resp. lowest) weight for the action of $\OO(2k,\bC)$.

For pluri-harmonic polynomials on $M_{n,2k}$, the $\GL(n,\bC)$-action is closely related to the action of $K_G\cong \U(n,\bR)$ via the Weil representation on their images under the map \eqref{eq:mPS}.
\begin{prop}\label{prop:phequiv}
For $P\in\fH_{n,2k}$ and $\begin{pmatrix}a&b\\c&d\end{pmatrix}\in G(\bR)$,
$$
   \omega_{2k}\left(\begin{pmatrix}a&b\\c&d\end{pmatrix},1\right)\phi_P(x)=\det(ci+d)^{-k} P\left((ci+d)^{-1}x\right)\,e^{i\pi\,\Tr(ai+b)(ci+d)^{-1}x\ltrans{x}}.
$$
In particular, for $\begin{pmatrix}a&b\\-b&a\end{pmatrix}\in K_G$ ($ai+b\in \U(n,\bR)$) we have
$$
   \omega_{2k}\left(\begin{pmatrix}a&b\\-b&a\end{pmatrix},1\right)\phi_P=\det(ai+b)^k\phi_{(ai+b)\cdot P}.
$$
\end{prop}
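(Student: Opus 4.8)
The plan is to verify the identity on a generating set of $\Sp(2n,\bR)$ and propagate it using that $\omega_{2k}$ is a genuine representation. By linearity in $P$ we may take $P$ homogeneous, of degree $d$ say; note also that composing a pluri-harmonic $P$ with a left translation $x\mapsto Ax$, $A\in\GL(n,\bC)$, gives again a pluri-harmonic polynomial, since each $\Delta_{ij}$ contracts only the column index of $x$. Recall that $\Sp(2n,\bR)$ is generated by the Siegel parabolic $Q_G(\bR)=\{\begin{psm}a&b\\0&\ltrans{a}^{-1}\end{psm}\}$ together with $w=\begin{psm}0&\bid_n\\-\bid_n&0\end{psm}$. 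It is convenient to prove the slightly more flexible statement: with $\phi_{P,\tau}(x)=P(x)\,e^{i\pi\Tr\tau x\ltrans{x}}$ for $\tau$ in the Siegel upper half space $\bH_n$ and $P\in\fH_{n,2k}$, one has for all $g=\begin{psm}a&b\\c&d\end{psm}\in\Sp(2n,\bR)$
$$
   \omega_{2k}(g,1)\phi_{P,\tau}(x)=\det(c\tau+d)^{-k}\,P\bigl((c\tau+d)^{-1}x\bigr)\,e^{i\pi\Tr\bigl((a\tau+b)(c\tau+d)^{-1}\bigr)x\ltrans{x}},
$$
which specializes to the proposition at $\tau=i\bid_n$. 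The right-hand side is a bona fide Schwartz function since $c\tau+d\in\GL(n,\bC)$ and $(a\tau+b)(c\tau+d)^{-1}=g\cdot\tau\in\bH_n$ for $g\in\Sp(2n,\bR)$.

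For $g\in Q_G(\bR)$ the identity is immediate from the formula of \S\ref{sec:weil} for the action of the Siegel parabolic, after simplifying with the cyclicity of the trace; pluri-harmonicity plays no role here, and it holds for an arbitrary polynomial $P$. For $g=w$, once the normalizing power of $i$ from the Schr{\"o}dinger model has been absorbed, the identity becomes the computation of the Fourier transform of $\phi_{P,\tau}$, and this is where pluri-harmonicity is genuinely needed. Completing the square in $\Tr\tau x\ltrans{x}$ and shifting the integration contour (legitimate because, for $\im\tau>0$, $\phi_{P,\tau}$ is entire and rapidly decaying) reduces the Gaussian integral of a monomial in $x$ against $e^{i\pi\Tr\tau x\ltrans{x}}$ to the action of a second-order constant-coefficient operator assembled from the entries of $\tau^{-1}$ and the operators $\Delta_{ij}=\sum_r\partial_{x_{ir}}\partial_{x_{jr}}$, all of which kill a pluri-harmonic polynomial; the integral therefore simply evaluates $P$ at the shift, producing $\det(-i\tau)^{-k}\,P(-\tau^{-1}x)\,e^{-i\pi\Tr\tau^{-1}x\ltrans{x}}$, which is the matrix-argument form of the Hecke--Bochner identity. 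With both generators in hand, a general $g$, written as a word in $Q_G(\bR)$ and $w$, is treated by $\omega_{2k}(g_1g_2,1)=\omega_{2k}(g_1,1)\omega_{2k}(g_2,1)$, using at each stage that $\omega_{2k}(g_2,1)\phi_{P,\tau}$ is again a scalar times some $\phi_{P',\tau'}$ with $P'=P\circ(c_2\tau+d_2)^{-1}$ pluri-harmonic and $\tau'=g_2\cdot\tau\in\bH_n$, and that the scalars multiply correctly thanks to the cocycle identity $j(g_1g_2,\tau)=j(g_1,g_2\cdot\tau)\,j(g_2,\tau)$ for the automorphy factor $j(g,\tau)=\det(c\tau+d)$ and the associativity of the $\Sp(2n,\bR)$-action on $\bH_n$; this is exactly the purpose of carrying the auxiliary parameter $\tau$.

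The displayed special case for $\kappa=\begin{psm}a&b\\-b&a\end{psm}\in K_G$ then falls out by taking $g=\kappa$, $\tau=i\bid_n$: since $\kappa$ fixes $i\bid_n$, one has $(ai+b)(ci+d)^{-1}=i\bid_n$, so the exponential collapses to $e^{-\pi\Tr x\ltrans{x}}$; under $K_G\cong\U(n,\bR)$ the matrix $ci+d$ is a fixed scalar multiple of the unitary matrix $ai+b$, so homogeneity of $P$ turns $P((ci+d)^{-1}x)$ into a constant times $P$ composed with the $\GL(n,\bC)$-translate that, by the definition of that action, is $(ai+b)\cdot P$; collecting all the scalars (the power of $i$ coming from $d$, and the determinant of the unitary part) leaves precisely $\det(ai+b)^k$. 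The one delicate point throughout is the consistent bookkeeping of the metaplectic normalization---the powers of $i$ and the branch implicit in $\det(c\tau+d)^{-k}$---which has to be matched in the $w$-case via Hecke--Bochner and again in the composition step; with that pinned down, the remaining checks are routine.
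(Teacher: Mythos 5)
Your architecture is in substance the paper's own proof: the paper deduces the proposition by combining the model formulas \eqref{eq:weil} with \cite[Lemma 4.5]{KVWeil}, and that lemma is exactly the Hecke--Bochner identity you re-derive for the Weyl element; your only real difference is organizational (generators plus the cocycle in the auxiliary parameter $\tau$ instead of evaluating \eqref{eq:weil} for a general $g$ in one stroke). The parabolic case, the pluri-harmonicity argument for the Gaussian integral, and the cocycle propagation are all fine.

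The gap is exactly the point you set aside as ``the one delicate point'': the root-of-unity normalization is the only nontrivial constant in the statement, and as written your bookkeeping does not close. With the printed formula $\omega_{2k}\left(\begin{psm}0&\bid_n\\-\bid_n&0\end{psm},1\right)\phi(x)=i^{-nk}\int\phi(y)\,e^{2\pi i\,\Tr x\ltrans{y}}\,dy$, your own computation gives the constant $i^{-nk}\det(-i\tau)^{-k}=\det(\tau)^{-k}$, whereas your target identity at $g=\begin{psm}0&\bid_n\\-\bid_n&0\end{psm}$ requires $\det(c\tau+d)^{-k}=\det(-\tau)^{-k}$; already for $P=1$, $\tau=i\bid_n$ the two sides are $i^{-nk}e^{-\pi\Tr x\ltrans{x}}$ versus $i^{nk}e^{-\pi\Tr x\ltrans{x}}$, a discrepancy of $(-1)^{nk}$. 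So ``absorbing the power of $i$'' is not routine: you must actually determine the scalar attached to the Weyl element. One clean way is to note that the derived action of the compact Cartan on the Gaussian is forced by the parabolic formulas together with the mere fact that the Weyl element acts by a scalar multiple of the Fourier transform (the unknown scalar cancels under conjugation), giving weight $(k,\dots,k)$ and hence $\omega_{2k}\left(\begin{psm}0&\bid_n\\-\bid_n&0\end{psm},1\right)\phi_{1}=i^{+nk}\phi_{1}$; in other words the constant $i^{-mk}$ displayed in \S\ref{sec:weil} is the one attached to $\begin{psm}0&-\bid_m\\\bid_m&0\end{psm}$, which is also what the paper uses later when it asserts that $\omega_{2k}\left(\begin{psm}0&-\bid_n\\\bid_n&0\end{psm},1\right)$ acts by $(-i)^{\sum_j t_j}$ on $\phi_Q$ for $Q\in V^{\OO\text{-LW}}_{\fH,k,\ut}$. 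A similar caution applies to your last step: the scalars collapse to $\det(ai+b)^k\phi_{(ai+b)\cdot P}$ only under the reading $a+ib$ of the identification of $K_G$ with $\U(n,\bR)$, since then $ci+d=a-ib$ is unitary, $\det(ci+d)^{-k}=\det(a+ib)^k$ and $(ci+d)^{-1}=\ltrans{(a+ib)}$, with no appeal to homogeneity; your ``fixed scalar multiple plus homogeneity'' accounting leaves residual factors $i^{nk}$ and $i^{\deg P}$ that do not cancel. None of this changes your overall strategy, but the deferred normalization is where the entire content of the constant lies, so it has to be carried out explicitly.
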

\begin{proof}
It follows from combining the formula \eqref{eq:weil} and \cite[Lemma 4.5]{KVWeil}.
\end{proof}

By studying the space of pluri-harmonic polynomials, the following theorem on theta correspondence between $G$ and $\OO(2k,\bR)$ is deduced in \cite{KVWeil}.
\begin{thm}[{\cite[Proposition 6.11, Corollary 6.12, Theorem 6.13]{KVWeil}}]
Assume that $k\geq n+1$.
\begin{itemize}
\item (Theta correspondence) As a representation of $(\Lie G)_{\bC}\times \OO(2k,\bR)$,
\begin{equation}\label{eq:ThetaCor}
   \mr{Im}\,\eqref{eq:mPS}_{m=n}\cong\bigoplus_{\substack{\ut\,\mr{dominant}\\t_n\geq k}}\cD_{\ut}\boxtimes \lambda(\ut,k).
\end{equation}
\item Under the isomorphism \eqref{eq:ThetaCor}, the lowest $K_G$-types of the holomorphic discrete series correspond to pluri-harmonic polynomials. In particular, 
$$
   \phi_{Q_{k,\ut}}\in\left(\textnormal{lowest $K_G$-type in }\cD_{\ut}\right)\boxtimes\lambda(\ut,k),
$$
and is the highest weight vector for the action of $K_G\times\OO(2k,\bR)$.
\end{itemize}
\end{thm}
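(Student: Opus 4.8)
Since this is \cite[Proposition~6.11, Corollary~6.12, Theorem~6.13]{KVWeil}, the plan is to recall the argument. It proceeds in two stages: first one decomposes the polynomial ring $\bC[M_{n,2k}]$ under the joint action of $\GL(n,\bC)$ (left transpose translation) and $\OO(2k,\bC)$ (right translation), and then one transports this decomposition through \eqref{eq:mPS} to read off the $(\Lie G)_{\bC}\times\OO(2k,\bR)$-module structure on the image of \eqref{eq:mPS} with $m=n$.

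\emph{Stage 1 (the algebraic harmonic decomposition).} By the first fundamental theorem of invariant theory for the orthogonal group, the ring of $\OO(2k,\bC)$-invariants in $\bC[M_{n,2k}]$ is generated by the $\binom{n+1}{2}$ entries of the symmetric matrix $x\ltrans{x}$, which are algebraically independent because $2k\geq 2n$ (we have $k\geq n+1$). The key point is the separation-of-variables isomorphism
$$
   \bC[M_{n,2k}]\;\cong\;\fH_{n,2k}\otimes\bC[x\ltrans{x}]
$$
of $\GL(n,\bC)\times\OO(2k,\bC)$-modules. I would prove it by observing that multiplication by the entries of $x\ltrans{x}$, the Laplacians $\Delta_{ij}$, and the $\mathfrak{gl}(n)$-action together generate a copy of $(\Lie G)_{\bC}$ on $\bC[M_{n,2k}]$ commuting with $\OO(2k,\bC)$, and then invoking the $(\Lie G)_{\bC}$-module structure of this ``Fock model'' (i.e.\ Howe duality for the pair $((\Lie G)_{\bC},\OO(2k))$). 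This makes $\fH_{n,2k}$ multiplicity-free under $\GL(n,\bC)\times\OO(2k,\bC)$, with isotypic pieces $V_{\fH,k,\ut}$ indexed by dominant $\ut$ with $t_n\geq k$ and having $\GL(n)$-highest weight $\ut-k$; since $\ut-k$ has at most $n<k$ nonzero parts, the corresponding $\OO(2k,\bC)$-summand $\lambda(\ut,k)$ is irreducible and stays irreducible on $\SO(2k,\bC)$ with highest weight $(t_1-k,\dots,t_n-k,0,\dots,0)$. That $Q_{k,\ut}$ and $\wt{Q}_{k,\ut}$ generate $V_{\fH,k,\ut}$ with the claimed extreme weights is a direct computation with $\fQ_{k,\ut}$ applied to $\bz(x)$, resp.\ $\ol{\bz}(x)$.

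\emph{Stage 2 (transport to the Weil representation and the lowest $K_G$-types).} The map \eqref{eq:mPS} with $m=n$ intertwines this polynomial model with the Schr{\"o}dinger model of $\omega_{2k}$ on its image, which is dense in $\cS(M_{n,2k}(\bR))$ and stable under $(\Lie G)_{\bC}\times\OO(2k,\bR)$. Using the formulas of \S\ref{sec:weil} together with \eqref{eq:weil} and \eqref{eq:mu-sigma}, I would compute the root-space action on $\phi_P$: $\fq^+_G=(\Lie G)^{-1,1}_{\bC}$ raises polynomial degree by $2$ with top term a nonzero multiple of $(\Tr\sigma\,x\ltrans{x})P$, while $\fq^-_G=(\Lie G)^{1,-1}_{\bC}$ acts, after conjugating away the Gaussian, through the span of the $\Delta_{ij}$, hence annihilates $\phi_P$ exactly when $P$ is pluri-harmonic; and by Proposition~\ref{prop:phequiv} the $K_G\cong\U(n,\bR)$-action on $\phi_P$ for $P\in\fH_{n,2k}$ is the $\GL(n)$-action twisted by $\ddet^{k}$. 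Consequently $\phi_{Q_{k,\ut}}$ is a $\fq^-_G$-annihilated vector which is $K_G$-highest of weight $\ut$, so the $(\Lie G)_{\bC}$-submodule it generates is a holomorphic lowest-weight module with lowest $K_G$-type of highest weight $\ut$; for $t_n\geq k\geq n+1$ (so $t_n\geq n+1$, the square-integrable range) this module is $\cD_{\ut}$, and since $\OO(2k,\bR)$ commutes with $(\Lie G)_{\bC}$ and acts on the $K_G$-orbit of $\phi_{Q_{k,\ut}}$ through $\lambda(\ut,k)$, the submodule is $\cD_{\ut}\boxtimes\lambda(\ut,k)$. Running over all admissible $\ut$ and using Stage 1 (every $\fq^-_G$-annihilated vector comes from $\fH_{n,2k}$, and the full image is recovered from $\fH_{n,2k}$ by applying $\fq^+_G$, which corresponds to multiplying by powers of $x\ltrans{x}$ modulo lower-degree terms), these submodules exhaust the image with no overlap, giving \eqref{eq:ThetaCor}. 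Along the way one obtains the second bullet: the $\fq^-_G$-annihilated vectors in the image are exactly $\phi_{\fH_{n,2k}}$, which under $K_G\times\OO(2k,\bR)$ decompose, via the $\ddet^{k}$-twist, as $\bigoplus_{\ut}(\text{lowest }K_G\text{-type of }\cD_{\ut})\boxtimes\lambda(\ut,k)$; in particular $\phi_{Q_{k,\ut}}$ lies in the lowest $K_G$-type of the $\cD_{\ut}$-summand, generates $V_{\fH,k,\ut}$, and --- being $\GL(n)$- and $\OO(2k)$-highest --- is the highest weight vector for $K_G\times\OO(2k,\bR)$.

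\emph{Main obstacle.} The technical heart is the separation-of-variables isomorphism and the multiplicity-one description of $\fH_{n,2k}$ in Stage 1, together with the ``no leftover'' assertion in Stage 2, i.e.\ that the summands $\cD_{\ut}\boxtimes\lambda(\ut,k)$ genuinely exhaust the image of \eqref{eq:mPS}. The cleanest route is via Howe duality for the dual pair $((\Lie G)_{\bC},\OO(2k))$ on the polynomial/Fock side; but one must still match the Fock and Schr{\"o}dinger models, carefully track the $\ddet^{k}$-twist of Proposition~\ref{prop:phequiv} relating the $\GL(n)$-weights to the $K_G$-weights, and separately invoke the standard (but non-formal) fact that a holomorphic lowest-weight module with lowest $K_G$-type of highest weight $\ut$ is, when $t_n\geq n+1$, the unitary holomorphic discrete series $\cD_{\ut}$.
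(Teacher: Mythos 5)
The paper offers no proof of this statement---it is quoted directly from \cite[Proposition 6.11, Corollary 6.12, Theorem 6.13]{KVWeil}---and your reconstruction (separation of variables $\bC[M_{n,2k}]\cong\fH_{n,2k}\otimes\bC[x\ltrans{x}]$ for $k\geq n$, the multiplicity-free $\GL(n,\bC)\times\OO(2k,\bC)$ decomposition of the pluri-harmonics, then transport through \eqref{eq:mPS} with the $\ddet^k$-twist of Proposition~\ref{prop:phequiv} to identify $\fq^-_G$-annihilated vectors with pluri-harmonics and the generated lowest-weight modules with $\cD_{\ut}$ in the range $t_n\geq n+1$) is precisely the Kashiwara--Vergne argument the citation refers to. So your proposal is correct and takes essentially the same route as the paper's source.
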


We record here another fact about pluri-harmonic polynomials which will be used in the next section. Denote by $\bC[x\ltrans{x}]$ the subspace of $\bC[M_{n,2k}]$ spanned by polynomials in entries of $x\ltrans{x}$. By \cite[Remark under Lemma 5.3]{KVWeil}, if $k\geq n$ then
\begin{equation*}
   \bC[M_{n,2k}]=\fH_{n,2k}\otimes_{\bC}\bC[x\ltrans{x}].
\end{equation*}
Hence for $t_1\geq\dots t_n\geq k\geq n+1$, we have
\begin{equation}\label{eq:otherK}
   \text{embedding $\eqref{eq:mPS}_{m=n}$}\,\Big( V_{\fH,k,\ut}\otimes_{\bC}\bC[x\ltrans{x}]\Big)\cong\cD_{\ut}\boxtimes\lambda(\ut,k).
\end{equation}

Next we consider the space $\fH_{n,2k}\otimes_{\bC}\fH_{n,2k}\subset\bC[M_{2n,2k}]$. First we define
\begin{align*}
   P^{\hol}_{k,\ut}&\in V_{\fH,k,\ut}\otimes_{\bC} V_{\fH,k,\ut}, &P^\hol_{k,\ut}\left(X=\begin{pmatrix}x_1\\x_2\end{pmatrix}\right)&=Q_{k,\ut}(x_1)\,\wt{Q}_{k,\ut}(x_2).
\end{align*}
(Note that $P^\hol_{k,\ut}$ does not belong to $\fH_{2n,2k}$.) The group $\OO(2k,\bC)$ acts on $\fH_{n,2k}\otimes_{\bC}\fH_{n,2k}$ diagonally by right translation. Since $\dim_{\bC}\left(\lambda(\ut,k)\otimes\lambda(\ut,k)\right)^{\OO(2k,\bC)}=1$, there exists a unique a pluri-harmonic polynomial
$$
   P^{\hol,\inv}_{k,\ut}\in \left(V_{\fH,k,\ut}\otimes_{\bC} V_{\fH,k,\ut}\right)^{\OO(2k,\bC)}
$$
such that it is the the highest weight vector for the action of $\GL(n,\bC)\times\GL(n,\bC)$ and its evaluation at
\begin{equation}\label{eq:ev}
   \begin{blockarray}{ccccc}n&k-n&n&k-n&\\\begin{block}{(cccc)c}\bid_n&0&0&0&n\\0&0&\bid_n&0&n\\\end{block}\end{blockarray}\,\begin{pmatrix}\bid_k&\bid_k\\i\bid_k&-i\bid_k\end{pmatrix}^{-1}
\end{equation}
equals $1$.

\begin{rmk}
The invariant pluri-harmonic polynomial $P^{\hol,\inv}_{k,\ut}$ has been used in \cite{Ib} to construct holomorphic differential operators. From $P^{\hol,\inv}_{k,\ut}$ one can define a polynomial $R_{k,\ut}$ on $\Sym(2n)$ such that $P^{\hol,\inv}_{k,\ut}(X)=R_{k,\ut}(X\ltrans{X})$. Suppose that $Z=(Z_{ij})_{1\leq i,j\leq 2n}$ is the coordinate of the Siegel upper half space for $\Sp(4n)$. Then $R_{k,\ut}\left(\frac{\partial}{\partial Z_{ij}}\right)$ composed with the restriction from $H$ to $G\times G$ sends holomorphic Siegel modular forms of scalar weight $k$ on $H$ to holomorphic Siegel modular forms of weight $\ut\boxtimes\ut$ on $G\times G$.
\end{rmk}

\subsection{Rewriting the zeta integral in terms of $Q_{k,\ut}$}
\begin{prop}\label{prop:ZW}
\begin{align*}
   Z_\infty\left(f_{k,\ut},v^\vee_{\ut},v_{\ut}\right)=i^{nk}2^{-\sum t_j+nk}\,\dim\lambda(\ut,k) \int_{G(\bR)}\left<\omega_{2k}(g,1)\phi_{Q_{k,\ut}},\phi_{\wt{Q}_{k,\ut}}\right>\left<g\cdot v^\vee_{\ut},v_{\ut}\right>\,dg.
\end{align*}
\end{prop}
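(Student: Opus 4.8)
The plan is to start from Proposition~\ref{prop:fMC}, which expresses $f_{k,\ut}\left(S^{-1}_H\iota(g,1)\right)$ as a constant times the matrix-coefficient functional $\MC_{\omega,2k}\left(g,\phi_{P^0_{k,\ut}}\right)$, and then reduce the polynomial $P^0_{k,\ut}$ appearing there to $P^{\hol,\inv}_{k,\ut}$, and finally to $Q_{k,\ut}\otimes\wt{Q}_{k,\ut}$. First I would substitute the formula of Proposition~\ref{prop:fMC} into the definition \eqref{eq:Zinfty} of $Z_\infty$, obtaining
\begin{equation*}
   Z_\infty\left(f_{k,\ut},v^\vee_{\ut},v_{\ut}\right)=i^{nk}\,2^{-\sum t_j+nk}\int_{G(\bR)}\MC_{\omega,2k}\left(g,\phi_{P^0_{k,\ut}}\right)\left<g\cdot v^\vee_{\ut},v_{\ut}\right>\,dg.
\end{equation*}
Now the matrix coefficient $g\mapsto\left<g\cdot v^\vee_{\ut},v_{\ut}\right>$ is, up to the natural identifications, a matrix coefficient of $\cD_{\ut}$ paired against $\cD^\vee_{\ut}$, with $v_{\ut}$ (resp. $v^\vee_{\ut}$) the highest-weight vector in the lowest $K_G$-type. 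By \eqref{eq:ThetaCor} and \eqref{eq:otherK}, the $\cD_{\ut}\boxtimes\lambda(\ut,k)$-isotypic piece of $\cS\left(M_{n,2k}(\bR)\right)$ (and of $\cS\left(M_{n,2k}(\bR)\right)^\mvw$) occurs with multiplicity one, and $\phi_{Q_{k,\ut}}$ (resp. a suitable partner, realized via $\wt{Q}_{k,\ut}$ in the MVW-twisted model) is the highest-weight vector for $K_G\times\OO(2k,\bR)$ inside that piece. The key point is Schur-orthogonality: integrating the product of a matrix coefficient of $\cD_{\ut}$ against the matrix-coefficient functional $\MC_{\omega,2k}(g,\cdot)$ kills everything outside the $\cD_{\ut}$-isotypic component of $\phi_{P^0_{k,\ut}}$ viewed as an element of $\cS\left(M_{2n,2k}(\bR)\right)\cong\cS\left(M_{n,2k}(\bR)\right)\otimes\cS\left(M_{n,2k}(\bR)\right)$, and within that component only the part pairing nontrivially against $v_{\ut}^\vee\otimes v_{\ut}$ survives — that is, only the $\OO(2k)$-diagonal-invariant, $\GL(n)\times\GL(n)$-highest-weight vector contributes. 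Concretely, I would decompose $\phi_{P^0_{k,\ut}}=\phi_{P^{\hol,\inv}_{k,\ut}}\oplus(\text{orthogonal rest})$ with respect to the $K_G\times K_G$-decomposition and the $\OO(2k,\bR)$-decomposition; only $\phi_{P^{\hol,\inv}_{k,\ut}}$ survives the integral, and by Proposition~\ref{prop:p0} its leading term $\fQ_{k,\ut}(x_1\,\ltrans{x}_2)$ already fixes the projection onto $V_{\fH,k,\ut}\otimes V_{\fH,k,\ut}$ (the lower-degree terms lie in $V_{\fH,k,\ut}\otimes\bC[x\ltrans{x}]$-type pieces of strictly lower $K$-type that are orthogonal after integrating against the lowest-$K_G$-type matrix coefficient), so the relevant projection constant is $1$.

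Next I would pass from $P^{\hol,\inv}_{k,\ut}$ to $Q_{k,\ut}\otimes\wt{Q}_{k,\ut}=P^{\hol}_{k,\ut}$. Both lie in $V_{\fH,k,\ut}\otimes_{\bC}V_{\fH,k,\ut}$ and both are highest-weight vectors for $\GL(n,\bC)\times\GL(n,\bC)$; $P^{\hol,\inv}_{k,\ut}$ is moreover the diagonal-$\OO(2k,\bC)$-invariant one, normalized by its value $1$ at \eqref{eq:ev}. Writing $\lambda(\ut,k)\otimes\lambda(\ut,k)\cong\bigoplus_\nu\,(\text{irreducibles})$ under the diagonal $\OO(2k,\bC)$, the vector $P^{\hol}_{k,\ut}$ is the product of an $\OO(2k)$-highest-weight vector with an $\OO(2k)$-lowest-weight vector; its projection to the one-dimensional invariant line is a nonzero scalar, and in fact equals the value of $P^{\hol}_{k,\ut}$ at \eqref{eq:ev}, which by the very definitions of $Q_{k,\ut}$, $\wt{Q}_{k,\ut}$ (through $\bz$, $\ol{\bz}$ and $\fQ_{k,\ut}$) and of the point \eqref{eq:ev} equals $\dim\lambda(\ut,k)$ — this is the numerical identity that produces the $\dim\lambda(\ut,k)$ factor in the statement. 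After integrating against the matrix coefficient, only the diagonal-invariant projection of $P^{\hol}_{k,\ut}$ contributes, which is $\dim\lambda(\ut,k)$ times $P^{\hol,\inv}_{k,\ut}$; replacing $\phi_{P^0_{k,\ut}}$ by $\frac{1}{\dim\lambda(\ut,k)}\phi_{P^{\hol}_{k,\ut}}$ inside the integral is therefore harmless, and unfolding $\MC_{\omega,2k}\left(g,\phi_{Q_{k,\ut}}\otimes\phi_{\wt Q_{k,\ut}}\right)=\left<\omega_{2k}(g,1)\phi_{Q_{k,\ut}},\phi_{\wt Q_{k,\ut}}\right>$ (using \eqref{eq:resmc} and the definition of $\MC_{\omega,2k}$) yields exactly the asserted formula, with the extra $\dim\lambda(\ut,k)$ coming out front.

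The main obstacle, and where I would spend the most care, is the first reduction: justifying rigorously that integrating $\MC_{\omega,2k}(g,\phi)\left<g\cdot v^\vee_{\ut},v_{\ut}\right>$ over $G(\bR)$ annihilates every summand of $\phi$ outside the single $\cD_{\ut}\boxtimes\lambda(\ut,k)\boxtimes(\text{diag-}\OO(2k)\text{-invariant})$ line — i.e. a Schur-orthogonality/matrix-coefficient-integration argument for the (non-tempered, but square-integrable) holomorphic discrete series $\cD_{\ut}$, together with the multiplicity-one statements \eqref{eq:ThetaCor}–\eqref{eq:otherK} and the fact that $v_{\ut}$, $v^\vee_{\ut}$ sit in the lowest $K_G$-type so that only the lowest-$K_G$-type (hence pluri-harmonic) part of $\phi$ contributes. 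Convergence of all the integrals involved is guaranteed by \cite[Proposition 4.3.1]{SLF}. Once the orthogonality bookkeeping and the normalization constant $\dim\lambda(\ut,k)$ (via evaluation at \eqref{eq:ev}) are pinned down, the rest is direct substitution.
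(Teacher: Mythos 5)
Your overall strategy (substitute Proposition~\ref{prop:fMC}, discard everything orthogonal to the matrix coefficient of $\cD_{\ut}$ so that only $\phi_{P^{\hol,\inv}_{k,\ut}}$ survives, then relate $P^{\hol,\inv}_{k,\ut}$ to $Q_{k,\ut}\otimes\wt{Q}_{k,\ut}$) is the same as the paper's, and the first reduction is essentially right (the paper makes it precise via \eqref{eq:hpr} and \eqref{eq:diff}: the harmonic parts of $P^0_{k,\ut}-P^{\hol,\inv}_{k,\ut}$ have strictly smaller degree, hence land in $\cD_{\ut'}$ with $\sum t'_j<\sum t_j$, and orthogonality of matrix coefficients of inequivalent discrete series kills them). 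The genuine problem is in the last step, where the factor $\dim\lambda(\ut,k)$ is produced: two of your numerical claims are false. First, $P^{\hol}_{k,\ut}$ evaluated at \eqref{eq:ev} equals $1$, not $\dim\lambda(\ut,k)$; at that point $\bz(x_1)$ and $\ol{\bz}(x_2)$ both have upper-left $n\times n$ block $\bid_n$, so $Q_{k,\ut}(x_1)=\wt{Q}_{k,\ut}(x_2)=\fQ_{k,\ut}(\bid_n)=1$ (indeed this value $1$ is exactly the normalization used to pin down $P^{\hol,\inv}_{k,\ut}$ and the constant $C=1$ in \eqref{eq:CR}). Second, the diagonal-$\OO(2k)$-invariant projection of $P^{\hol}_{k,\ut}$ is $\frac{1}{\dim\lambda(\ut,k)}P^{\hol,\inv}_{k,\ut}$, not $\dim\lambda(\ut,k)\cdot P^{\hol,\inv}_{k,\ut}$: under the identification of $\left(V_{\fH,k,\ut}\otimes V_{\fH,k,\ut}\right)^{N_n\times N_n}$ with $\lambda(\ut,k)\otimes\lambda(\ut,k)^\vee$ sending $P^{\hol}_{k,\ut}\mapsto v_1\otimes v^\vee_1$ and $P^{\hol,\inv}_{k,\ut}\mapsto\sum_j v_j\otimes v^\vee_j$, averaging $v_1\otimes v^\vee_1$ over $\OO(2k)$ gives $\frac{1}{\dim\lambda(\ut,k)}\sum_j v_j\otimes v^\vee_j$ by Schur orthogonality. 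Moreover, even granting your two claims, the substitution you make at the end (replacing $\phi_{P^0_{k,\ut}}$ by $\frac{1}{\dim\lambda(\ut,k)}\phi_{P^{\hol}_{k,\ut}}$) would put $\frac{1}{\dim\lambda(\ut,k)}$ in front of the integral rather than $\dim\lambda(\ut,k)$; the constant in your write-up is asserted rather than derived, and as written the derivation is off by $\dim\lambda(\ut,k)^2$.

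The clean way to get the factor (the paper's \emph{Step 1}) is not to project $P^{\hol}_{k,\ut}$ but to evaluate the functional: for fixed $g$, the map $P_1\otimes P_2\mapsto\MC_{\omega,2k}(g,\phi_{P_1}\otimes\phi_{P_2})$ is a diagonal-$\OO(2k,\bR)$-invariant linear functional on $\left(V_{\fH,k,\ut}\otimes V_{\fH,k,\ut}\right)^{N_n\times N_n}\cong\lambda(\ut,k)\otimes\lambda(\ut,k)^\vee$, hence proportional to the canonical contraction, which takes the value $1$ on $v_1\otimes v^\vee_1=P^{\hol}_{k,\ut}$ and the value $\dim\lambda(\ut,k)$ on $\sum_j v_j\otimes v^\vee_j=P^{\hol,\inv}_{k,\ut}$. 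This yields $\MC_{\omega,2k}\left(g,\phi_{P^{\hol,\inv}_{k,\ut}}\right)=\dim\lambda(\ut,k)\,\MC_{\omega,2k}\left(g,\phi_{P^{\hol}_{k,\ut}}\right)=\dim\lambda(\ut,k)\left<\omega_{2k}(g,1)\phi_{Q_{k,\ut}},\phi_{\wt{Q}_{k,\ut}}\right>$, with the constant in the correct direction and of the correct size.
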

\begin{proof}
The proof goes in two steps.

{\em Step 1}: Verify for all $g\in G(\bR)$ that
\begin{equation}\label{eq:ZWS1}
   \MC_{\omega,2k}\Big(g,\phi_{P^{\hol,\inv}_{k,\ut}}\Big)=\dim\lambda(\ut,k)\,\MC_{\omega,2k}\left(g,\phi_{P^\hol_{k,\ut}}\right)=\dim\lambda(\ut,k)\,\left<\omega_{2k}(g,1)\phi_{Q_{k,\ut}},\phi_{\wt{Q}_{k,\ut}}\right>.
\end{equation}
Let $N_n\subset \GL(n)$ be the unipotent radical of the standard Borel subgroup of upper triangular matrices. Then
\begin{align*}
   P^\hol_{k,\ut},\,P^{\hol,\inv}_{k,\ut}\in \left(V_{\fH,k,\ut}\otimes_{\bC} V_{\fH,k,\ut}\right)^{N_n(\bC)\times N_n(\bC)}.
\end{align*}
We fix a basis of $v_1,\dots,v_d$ of $\lambda(\ut,k)$ ($d=\dim\lambda(\ut,k)$) such that each $v_j$ is an eigenvector of the standard maximal torus consisting of 
\begin{align*}
   &\begin{psm}\bid_k&\bid_k\\i\bid_k&-i\bid_k\end{psm}\begin{psm}a_1&&&&&\\&\ddots&&&&\\&&a_k&&&\\&&&a^{-1}_1&&\\&&&&\ddots&\\&&&&&a^{-1}_k\end{psm}\begin{psm}\bid_k&\bid_k\\i\bid_k&-i\bid_k\end{psm}^{-1}\in\OO(2k,\bC),&a_1,\dots,a_k\in\bC^\times,
\end{align*}
and $v_1$ is the highest weight vector. Denote by $v^\vee_1,\dots,v^\vee_d$ the dual basis for $\lambda(\ut,k)^\vee$. We can fix an isomorphism
\begin{equation}\label{eq:Vlambda}
   \left(V_{\fH,k,\ut}\otimes_{\bC} V_{\fH,k,\ut}\right)^{N_n(\bC)\times N_n(\bC)}\stackrel{\sim}{\lra}\lambda(\ut,k)\otimes\lambda(\ut,k)^\vee
\end{equation}
such that $P^{\hol}_{k,\ut}$ is mapped to $v_1\otimes v^\vee_1$. Then this isomorphism maps $P^{\hol,\inv}_{k,\ut}$ to $C\,\sum_{j=1}^d v_j\otimes v^\vee_j$ for a constant $C$. We have the commutative diagram
$$
\xymatrix{
   \left(V_{\fH,k,\ut}\otimes_{\bC} V_{\fH,k,\ut}\right)^{N_n(\bC)\times N_n(\bC)}\ar[r]^-{\sim}\ar[dr]_{\txt{evaluated at \eqref{eq:ev}}\hspace{3em} }&\lambda(\ut,k)\otimes\lambda(\ut,k)^\vee\ar[d]^{\begin{array}{l}v_1\otimes v^\vee_1\mapsto 1\\v_i\otimes v_j\mapsto 0 \text{ if }i\neq 1\text{ or }j\neq 1\end{array}}\\&\bC
}.
$$
The composition of the horizontal and vertical maps sends $P^{\hol,\inv}_{k,\ut}$ to $C$, and the diagonal map sends it to $1$. Therefore, $C=1$ and \eqref{eq:Vlambda} maps $P^{\hol,\inv}_{k,\ut}$ to $\sum_{j=1}^d v_j\otimes v^\vee_j$. For each $g\in G(\bR)$, 
\begin{equation*}
   P_1\otimes P_2\longmapsto \MC_{\omega,2k}\left(g,\phi_{P_1}\otimes\phi_{P_2}\right)
\end{equation*}
defines an $\OO(2k,\bR)$-invariant pairing on $\left(V_{\fH,k,\ut}\otimes_{\bC} V_{\fH,k,\ut}\right)^{N_n(\bC)\times N_n(\bC)}$ and it must agree up to scalar with the composition of \eqref{eq:Vlambda} and the natural pairing on $\lambda(\ut,k)\otimes\lambda(\ut,k)^\vee$, which maps $v_1\otimes v^\vee_1$ to $1$ and $\sum_{j=1}^d v_j\otimes v^\vee_j$ to $\dim\lambda(\ut,k)$. Thus,
$$
   \MC_{\omega,2k}\Big(g,\phi_{P^{\hol,\inv}_{k,\ut}}\Big)=\dim\lambda(\ut,k)\,\MC_{\omega,2k}\left(g,\phi_{P^\hol_{k,\ut}}\right)
$$
and {\em Step 1} is done.

{\em Step 2}: Verify that
\begin{equation*}
   \int_{G(\bR)}\MC_{\omega,2k}\left(g,\phi_{P^0_{k,\ut}}\right)\left<g\cdot v^\vee_{\ut},v_{\ut}\right>\,dg=\int_{G(\bR)}\MC_{\omega,2k}\Big(g,\phi_{P^{\hol,\inv}_{k,\ut}}\Big)\left<g\cdot v^\vee_{\ut},v_{\ut}\right>\,dg.
\end{equation*}
By \eqref{eq:ThetaCor}, viewing $\cS(M_{2n,2k})^{\OO(2k,\bR)}$ as a representation of $G(\bR)\times G(\bR)$, it decomposes as a direct sum of $\cD_{\ut}\boxtimes\cD_{\ut}$ for $\ut$ dominant and $t_n\geq k$. We claim that
\begin{equation}\label{eq:diff}
   \phi_{P^0_{k,\ut}}-\phi_{P^{\hol,\inv}_{k,\ut}}\in  \bigoplus_{\substack{\ut'\text{ dominant}\\t'_n\geq k,\, \sum t'_j<\sum t_j}}\cD_{\ut'}\boxtimes\cD_{\ut'}.
\end{equation}
The claim implies that the matrix coefficient $\MC_{\omega,2k}\Big(\cdot,\phi_{P^0_{k,\ut}}-\phi_{P^{\hol,\inv}_{k,\ut}}\Big)$ pairs trivially with the matrix coefficient $\left<g\cdot v^\vee_{\ut},v_{\ut}\right>$. We are left to prove the claim. For $r\geq 0$, by considering the decomposition of $\bC[M_{n,2k}]^{\leq r}$ via the action of $\OO(2k,\bC)$, we know that
\begin{equation}\label{eq:hpr}
   \bC[M_{n,2k}]^{\leq r}\subset \fH^{r}_{n,2k}\oplus \fH^{\leq r-1}_{n,2k}\otimes\bC[x\ltrans{x}].
\end{equation}
Here we use the superscript $^{\leq r}$ (resp. $^{r}$) to indicate degree $\leq r$ (resp. homogeneous of degree $r$). The polynomial 
$$P'_{k,\ut}\left(X=\begin{pmatrix}x_1\\x_2\end{pmatrix}\right)= \fQ_{k,\ut}(x_1\ltrans{x}_2)$$
belongs to $\bC[M_{2n,2k}]^{\OO(2k,\bC)}$, and for the action of $\GL(n,\bC)\times\GL(n,\bC)$ it is fixed by $N_n(\bC)\times N_n(\bC)$ and of weight $(\ut,\ut)$. Hence its projection to the first factor on the right hand side of \eqref{eq:hpr} with $r=\sum t_j-nk$  must be a multiple of $P^{\hol,\inv}_{k,\ut}$. We have
\begin{equation}\label{eq:CR}
\begin{aligned}
  P'_{k,\ut}&=C\cdot P^{\hol,\inv}_{k,\ut}+R, &C\in\bC,\,R\in\bC[M_{2n,2k}]^{2\sum t_j-2nk}\cap (x_1\ltrans{x}_1,x_2\ltrans{x}_2)\bC[M_{2n,2k}]
\end{aligned}
\end{equation}
where $(x_1\ltrans{x}_1,x_2\ltrans{x}_2)\bC[M_{2n,2k}]$ denotes the ideal generated by entries of $x_1\ltrans{x}_1$ and $x_2\ltrans{x}_2$. Evaluating \eqref{eq:CR} at \eqref{eq:ev}, we get $C=1$. Taking into account Proposition~\ref{prop:p0}, we deduce that
\begin{equation*}
   P^0_{k,\ut}-P^{\hol,\inv}_{k,\ut}\in \left(\fH_{n,2k}\otimes\fH_{n,2k}\right)^{\leq 2\sum t_j-2nk-1}\otimes\bC[x_1\ltrans{x}_1,x_2\ltrans{x}_2]
\end{equation*}
which implies the claim \eqref{eq:diff}.

Combining {\em Step 1}, {\em Step 2} and Proposition~\ref{prop:fMC} proves the proposition. 
\end{proof}

\section{Computing the simpler integral}
In this section we compute the right hand side on the identity in Proposition~\ref{prop:fMC}.
\subsection{The formal degrees of $\cD_{\ut}$}
Let $V^{\OO\text{-HW}}_{\fH,k,\ut}$ (resp. $V^{\OO\text{-LW}}_{\fH,k,\ut}$) be the subspace of $V_{\fH,k,\ut}$ which is of the highest (resp. lowest) weight for the action of $\OO(2k,\bC)$. Thanks to \eqref{eq:otherK}, there exist isomorphisms
\begin{align*}
   \iota:&\,\text{embedding $\eqref{eq:mPS}_{m=n}$}\,\Big(V^{\OO\text{-HW}}_{\fH,k,\ut}\otimes_{\bC}\bC[x\ltrans{x}]\Big)\stackrel{\sim}{\lra} \cD_{\ut}, \\
   \iota^\vee:&\,\text{embedding $\eqref{eq:mPS}_{m=n}$}\,\Big(V^{\OO\text{-LW}}_{\fH,k,\ut}\otimes_{\bC}\bC[x\ltrans{x}]\Big)^\mvw\stackrel{\sim}{\lra} \cD^\vee_{\ut},
\end{align*}
and $Q_{k,\ut}\in V^{\OO\text{-HW}}_{\fH,k,\ut}$ (resp. $\wt{Q}_{k,\ut}\in V^{\OO\text{-LW}}_{\fH,k,\ut}$) is mapped to a multiple of $v_{\ut}$ (resp. $v^\vee_{\ut}$). Since the pairing \eqref{eq:Spairing} is $G(\bR)$-invariant, we know that for all $g\in G(\bR)$,
\begin{equation}\label{eq:SD}
   \frac{\left<\omega_{2k}(g,1)\phi_{Q_{k,\ut}},\phi_{\wt{Q}_{k,\ut}}\right>}{\left<\phi_{Q_{k,\ut}},\phi_{\wt{Q}_{k,\ut}}\right>}=\frac{\left<g\cdot v_{\ut},v^\vee_{\ut}\right>}{\left<v_{\ut},v^\vee_{\ut}\right>}.
\end{equation}
Therefore,
\begin{equation}\label{eq:WFD}
\begin{aligned}
   \int_{G(\bR)}\left<\omega_{2k}(g,1)\phi_{Q_{k,\ut}},\phi_{\wt{Q}_{k,\ut}}\right>\left<g\cdot v^\vee_{\ut},v_{\ut}\right>\,dg=&\left<\phi_{Q_{k,\ut}},\phi_{\wt{Q}_{k,\ut}}\right> \int_{G(\bR)}\left<g\cdot v_{\ut},v^\vee_{\ut}\right>\left<g\cdot v^\vee_{\ut},v_{\ut}\right>\,dg\\
   =&\,d(\cD_{\ut},dg)^{-1}\left<\phi_{Q_{k,\ut}},\phi_{\wt{Q}_{k,\ut}}\right>,
\end{aligned}
\end{equation}
where $d(\cD_{\ut},dg)$ is the formal degree of the holomorphic discrete series $\cD_{\ut}$ with respect to our chosen Haar measure $dg$ on $G(\bR)$.  

\begin{rmk}
Thanks to the compactness of the orthogonal group we consider here, the Weil representation decomposes as a direct sum of irreducible representations and we easily deduce the identity \eqref{eq:SD}. If the orthogonal group is not compact, then the ratio of the two sides will be a constant depending on $k,\ut$. Such a ratio has been computed in \cite{LiuD-U21,LiuD-Un1} for the unitary case when one of the unitary groups has signature $(n,1)$. 
\end{rmk}

Now it remains to compute the pairing $\left<\phi_{Q_{k,\ut}},\phi_{\wt{Q}_{k,\ut}}\right>$.

\subsection{The pairing of $\phi_{Q_{k,\ut}}$ with $\phi_{\wt{Q}_{k,\ut}}$}
First we define another element $\cI_{k,\ut}$ inside the space $V^{\OO\text{-HW}}_{\fH,k,\ut}\otimes V^{\OO\text{-LW}}_{\fH,k,\ut}$ as
\begin{align*}
   \cI_{k,\ut}(x_1,x_2)&=\fQ_{k,\ut}\left(\left(\ltrans{\bz}(x_1)\,\ol{\bz}(x_2)\right)_{\text{upper left $n\times n$-block}}\right), &(x_1,x_2)\in M_{n,2k}(\bR)\times M_{n,2k}(\bR).
\end{align*}
In order to see that $\cI_{k,\ut}\in V^{\OO\text{-HW}}_{\fH,k,\ut}\otimes V^{\OO\text{-LW}}_{\fH,k,\ut}$, we notice that the right translation on $\bz(x_1)$ and $\ol{\bz}(x_2)$ by
$$
   \begin{pmatrix}a_1&\ast&\cdots&\ast\\&a_2&\cdots&\ast\\&&\ddots&\vdots\\&&&a_k\end{pmatrix}\in\GL(k,\bC)
$$ 
is by $\prod_{j=1}^n a^{t_j-k}_j$ like $Q_{k,\ut}$, $\wt{Q}_{k,\ut}$. It is also easy to see that for all $\ba\in\GL(n,\bC)$,
\begin{equation}\label{eq:Iinv}
   \cI_{k,\ut}(\ba x_1,\ltrans{\ba}^{-1}x_2)=\cI_{k,\ut}(x_1,x_2).
\end{equation}

\begin{prop}\label{prop:WI}
$$
   \left<\phi_{Q_{k,\ut}},\phi_{\wt{Q}_{k,\ut}}\right>=(-i)^{\sum_{j=1}^n t_j}\dim\left(\GL(n),\ut\right)^{-1}\,\int_{M_{n,2k}(\bR)}\cI_{k,\ut}(x,x)\,e^{-2\pi\,\Tr x\ltrans{x}}\,dx,
$$
where $\dim\left(\GL(n),\ut\right)$ is the dimension of the irreducible algebraic $\GL(n)$-representation of highest weight $\ut$.
\end{prop}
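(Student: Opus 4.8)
The plan is to reduce in two easy stages to a Gaussian integral over $M_{n,n}(\bC)$, and then identify the remaining ratio with $\dim\left(\GL(n),\ut\right)$ by a representation-theoretic argument.

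\emph{Step 1: removing the Fourier transform.} Since $\wt{Q}_{k,\ut}$ is pluri-harmonic, Proposition~\ref{prop:phequiv} applied to $\begin{psm}0&-\bid_n\\\bid_n&0\end{psm}\in\Sp(2n,\bR)$ gives
\[
   \omega_{2k}\left(\begin{psm}0&-\bid_n\\\bid_n&0\end{psm},1\right)\phi_{\wt{Q}_{k,\ut}}(x)=i^{-nk}\,\wt{Q}_{k,\ut}(-ix)\,e^{-\pi\,\Tr x\ltrans{x}}.
\]
Because $\fQ_{k,\ut}$ is homogeneous of degree $\sum_j t_j-nk$ and $\ol{\bz}(-ix)=-i\,\ol{\bz}(x)$, one has $\wt{Q}_{k,\ut}(-ix)=(-i)^{\sum_j t_j-nk}\wt{Q}_{k,\ut}(x)$; inserting this in \eqref{eq:Spairing} and using $i\cdot(-i)=1$,
\[
   \left<\phi_{Q_{k,\ut}},\phi_{\wt{Q}_{k,\ut}}\right>=(-i)^{\sum_{j=1}^n t_j}\int_{M_{n,2k}(\bR)}Q_{k,\ut}(x)\,\wt{Q}_{k,\ut}(x)\,e^{-2\pi\,\Tr x\ltrans{x}}\,dx,
\]
so the assertion is equivalent to $\int_{M_{n,2k}(\bR)}Q_{k,\ut}\wt{Q}_{k,\ut}\,e^{-2\pi\Tr x\ltrans{x}}dx=\dim\left(\GL(n),\ut\right)^{-1}\int_{M_{n,2k}(\bR)}\cI_{k,\ut}(x,x)\,e^{-2\pi\Tr x\ltrans{x}}dx$.

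\emph{Step 2: reduction to $M_{n,n}(\bC)$.} Write $x=(u\mid v)$ with $u,v\in M_{n,k}(\bR)$ and set $Z=\left(\bz(x)_{ij}\right)_{1\le i,j\le n}\in M_{n,n}(\bC)$, which depends only on the first $n$ columns $u',v'$ of $u,v$ and equals $u'+iv'$. Then $Q_{k,\ut}(x)\wt{Q}_{k,\ut}(x)=\fQ_{k,\ut}(Z)\,\ol{\fQ_{k,\ut}(Z)}$ (as $\fQ_{k,\ut}$ has real coefficients), and unwinding the definition gives $\cI_{k,\ut}(x,x)=\fQ_{k,\ut}(\ltrans{Z}\ol{Z})$, which is real since $\ltrans{Z}\ol{Z}$ is Hermitian. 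All three integrands depend only on $u',v'$; integrating out the remaining Gaussian variables contributes the same constant $2^{-n(k-n)}$ to each side, and after the substitution $Z=u'+iv'$ the claim becomes
\[
   \int_{M_{n,n}(\bC)}\fQ_{k,\ut}(\ltrans{Z}\ol{Z})\,e^{-2\pi\,\Tr Z\ltrans{\ol{Z}}}\,dZ=\dim\left(\GL(n),\ut\right)\int_{M_{n,n}(\bC)}\left|\fQ_{k,\ut}(Z)\right|^2 e^{-2\pi\,\Tr Z\ltrans{\ol{Z}}}\,dZ.
\]

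\emph{Step 3: the representation-theoretic identity.} Applying the Cauchy--Binet formula to each leading principal minor, $\fQ_{k,\ut}(\ltrans{Z}W)=\sum_I c_I\,P_I(Z)P_I(W)$ with $c_I>0$ and each $P_I$ an integer-coefficient homogeneous polynomial of degree $d:=\sum_j t_j-nk$, a product of minors of the form $\det\left(Z_{S,\{1,\dots,j\}}\right)$; specializing $W=\ol{Z}$ gives $\fQ_{k,\ut}(\ltrans{Z}\ol{Z})=\sum_I c_I|P_I(Z)|^2$. On homogeneous polynomials of a fixed degree $d$ the integral $\int_{M_{n,n}(\bC)}P\,\ol{R}\,e^{-2\pi\Tr Z\ltrans{\ol{Z}}}dZ$ is a fixed positive multiple of the Fischer Hermitian pairing $\left<\,,\,\right>_{\mathrm F}$ (for which the monomials are orthogonal), so the identity reduces to $\sum_I c_I\left<P_I,P_I\right>_{\mathrm F}=\dim\left(\GL(n),\ut\right)\left<\fQ_{k,\ut},\fQ_{k,\ut}\right>_{\mathrm F}$. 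Let $W_\lambda\subset\bC[M_{n,n}(\bC)]$ be the irreducible $\GL(n)\times\GL(n)$-submodule (for the two-sided translation action) generated by the leading-minor monomial $\fQ_{k,\ut}$; it is isomorphic to $U_\lambda\boxtimes U_\lambda'$ with $\dim U_\lambda=\dim U_\lambda'=\dim\left(\GL(n),\ut\right)$, and $\fQ_{k,\ut}$ is the bi-highest-weight vector, of right-translation weight $\lambda=\ut-k$. Let $\cT$ be the operator on $W_\lambda$ pairing $P$ against the kernel $\fQ_{k,\ut}(\ltrans{Z}\ol{W})$ in the variable $Z$, i.e. $\cT(P)(W)=\sum_I c_I\left<P,P_I\right>_{\mathrm F}P_I(W)$, so that $\Tr\cT=\sum_I c_I\left<P_I,P_I\right>_{\mathrm F}$ by Parseval. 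The kernel is unchanged under $(Z,W)\mapsto(\ba Z,\ba W)$ for $\ba\in\U(n)$, because $\ltrans{\ba}\,\ol{\ba}=\bid_n$; hence $\cT$ is equivariant for the left-translation $\GL(n)$-action. It is self-adjoint and positive with image $\mathrm{span}\{P_I\}$; each $P_I$ has right-translation weight $\lambda$ (every minor occurring in it uses only initial columns, and the sizes occur with the same multiplicities as in $\fQ_{k,\ut}$), so $\mathrm{span}\{P_I\}$ lies in the highest-right-weight subspace of $W_\lambda$, which is left-$\GL(n)$-irreducible of dimension $\dim\left(\GL(n),\ut\right)$; being left-$\GL(n)$-stable and containing $\fQ_{k,\ut}\ne0$, it equals it. Therefore $\cT=c\,\Pi$ with $\Pi$ the orthogonal projection onto that subspace and $c>0$; evaluating $\cT\fQ_{k,\ut}$ at $Z=\bid_n$—where $\fQ_{k,\ut}(\ltrans{Z}\ol{\bid_n})=\fQ_{k,\ut}(Z)$ by transpose-invariance of the minors—forces $c=\left<\fQ_{k,\ut},\fQ_{k,\ut}\right>_{\mathrm F}$, and taking traces gives $\sum_I c_I\left<P_I,P_I\right>_{\mathrm F}=\dim\left(\GL(n),\ut\right)\left<\fQ_{k,\ut},\fQ_{k,\ut}\right>_{\mathrm F}$, as required.

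Steps~1 and~2 are routine bookkeeping; the substance is the last identity in Step~3, i.e.\ recognizing the Cauchy--Binet operator $\cT$ on $W_\lambda$ as a rescaled orthogonal projection onto a copy of the irreducible $\GL(n)$-representation of highest weight $\ut$, whose rank supplies exactly the factor $\dim\left(\GL(n),\ut\right)$.
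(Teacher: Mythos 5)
Your proof is correct, and the key step is carried out by a genuinely different mechanism than the paper's. Both arguments begin the same way: the Weyl element $\begin{psm}0&-\bid_n\\\bid_n&0\end{psm}$ acts on $\phi_{\wt{Q}_{k,\ut}}$ by the scalar $(-i)^{\sum_j t_j}$ (via Proposition~\ref{prop:phequiv}), which converts the pairing \eqref{eq:Spairing} into a plain Gaussian integral and produces the factor $(-i)^{\sum_j t_j}$. After that the routes diverge. The paper stays inside the Weil-representation modules: it observes that $(Q_1,Q_2)\mapsto\left<\phi_{Q_1},\phi_{Q_2}\right>$ is (up to the unitary trick) the unique $\GL(n)$-invariant pairing between the irreducible modules $V^{\OO\text{-HW}}_{\fH,k,\ut}$ and $V^{\OO\text{-LW}}_{\fH,k,\ut}$, identifies $\cI_{k,\ut}$ with the canonical invariant tensor $\sum_j v_j\otimes w_j$ using \eqref{eq:Iinv} together with evaluation at $x=\begin{pmatrix}\bid_n&0\end{pmatrix}$ to fix the normalization, and then reads off the factor $\dim\left(\GL(n),\ut\right)$ as the pairing of that invariant tensor against the dual-basis pairing normalized by $(v_1,w_1)$. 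You instead collapse everything to an explicit identity of Gaussian integrals on $M_{n,n}(\bC)$, expand $\fQ_{k,\ut}(\ltrans{Z}\ol{Z})$ by Cauchy--Binet into $\sum_I c_I|P_I|^2$, and recognize the associated kernel operator $\cT$ as $\left<\fQ_{k,\ut},\fQ_{k,\ut}\right>_{\mathrm F}$ times the orthogonal projection onto the left-irreducible span of the $P_I$, whose rank is $\dim\left(\GL(n),\ut\right)$; taking traces gives the identity. Both arguments ultimately rest on Schur's lemma (and both pass from $\U(n)$-equivariance to $\GL(n,\bC)$-equivariance by density, which you should state explicitly), but the paper's version is shorter and purely structural, while yours is self-contained polynomial algebra that makes the positivity and the source of the dimension factor completely explicit, at the cost of the Cauchy--Binet and Fischer-pairing bookkeeping. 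One cosmetic remark: your degree count $\deg\fQ_{k,\ut}=\sum_j t_j-nk$ is the intended one (the exponent $k-t_n$ in the paper's definition of $\fQ_{k,\ut}$ is evidently a typo for $t_n-k$), and your normalization of the Weyl-element scalar matches the paper's stated one, so no discrepancy arises.
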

\begin{proof}
The proof goes similarly as the first step in the proof of Proposition~\ref{prop:ZW}. Consider the $\bC$-linear pairing
\begin{equation}\label{eq:pairgl}
\begin{aligned}
   (\,,\,):V^{\OO\text{-HW}}_{\fH,k,\ut}\times V^{\OO\text{-LW}}_{\fH,k,\ut}&\lra \bC\\
   (Q_1,Q_2)&\longmapsto \left<\phi_{Q_1},\phi_{Q_2}\right>.
\end{aligned}
\end{equation}
For $\ba\in\GL(n,\bC)$, denote by $L(\ba)$ the left translation by $\ba$ on $\bC[M_{n,2k}]$. Due to the $G(\bR)$-invariance of the pairing \eqref{eq:Spairing} and Proposition~\ref{prop:phequiv}, the pairing $(\,,\,)$ has the equivariance property that
\begin{align*}
   \Big(L(a+ib)Q_1,L(ai-b)Q_2\Big)=\bigg<\omega_{2k}\left(\begin{pmatrix}a&b\\-b&a\end{pmatrix},1\right)\phi_{Q_1},\omega_{2k}\bigg(\begin{pmatrix}a&b\\-b&a\end{pmatrix}^\mvw,1\bigg)\phi_{Q_2}\bigg>=(Q_1,Q_2)
\end{align*}
for all $a+ib\in \U(n,\bR)$. On the other hand, for the $\GL(n,\bC)$-action by left transpose translation, $V^{\OO\text{-HW}}_{\fH,k,\ut}$ and $V^{\OO\text{-LW}}_{\fH,k,\ut}$ are both isomorphic to the irreducible algebraic $\GL(n)$-representation of highest weight $\ut$. Thus, there exists a basis $v_1,\dots,v_d$ (resp. $w_1,\dots,w_d$) of $V^{\OO\text{-HW}}_{\fH,k,\ut}$ (resp. $V^{\OO\text{-LW}}_{\fH,k,\ut}$) ($d=\dim\left(\GL(n),\ut\right)$) consisting of weight vectors such that
\begin{align*}
   v_1&=Q_{k,\ut},&w_1&=\wt{Q}_{k,\ut},&(v_i,w_j)&=0,\,i\neq j, &L(\ba)\times L(\ltrans{\ba}^{-1})\bigg(\sum_{j=1}^d v_j\otimes w_j\bigg)=\sum_{j=1}^d v_j\otimes w_j,\,\,\ba\in\GL(\bC).
\end{align*}
Then \eqref{eq:Iinv} implies that $\cI_{k,\ut}$ equals a multiple of $\sum_{j=1}^d v_j\times w_j$. The linear functional of evaluating at $x=\begin{pmatrix}\bid_n&0\end{pmatrix}$ annihilates all the weight vectors in $V^{\OO\text{-HW}}_{\fH,k,\ut}, V^{\OO\text{-LW}}_{\fH,k,\ut}$ except $v_1,w_1$. Its evaluation at $\cI_{k,\ut}$ shows that
\begin{equation}\label{eq:dI}
   \dim\left(\GL(n),\ut\right)^{-1}\cdot \text{the value of $\cI_{k,\ut}$ under the pairing \eqref{eq:pairgl}}=\left<\phi_{Q_{k,\ut}},\phi_{\wt{Q}_{k,\ut}}\right>.
\end{equation}
Since $\omega_{2k}\left(\begin{pmatrix}0&-\bid_n\\\bid_n&0\end{pmatrix},1\right)$ acts by $(-i)^{\sum_{j=1}^n t_j}$ on $\phi_Q$ for all $Q\in V^{\OO\text{-LW}}_{\fH,k,\ut}$ due to Proposition~\ref{prop:phequiv}, the left hand side of \eqref{eq:dI} equals the right hand side of the identity in the statement of the proposition and we finish the proof.
\end{proof}

\subsection{Computing the integral involving $\cI_{k,\ut}$}
The integral on the right hand side of the identity in Proposition~\ref{prop:WI} is particularly convenient to compute.
\begin{prop}\label{prop:IGamma}
$$
    \int_{M_{n,2k}(\bR)}\cI_{k,\ut}(x,x)\,e^{-2\pi\,\Tr x\ltrans{x}}\,dx=2^{-\sum_{j=1}^nt_j}\pi^{-\sum_{j=1}^n t_j+nk}\,\frac{\prod_{j=1}^n\Gamma(t_j-j-k+n+1)}{\prod_{j=1}^n\Gamma(n-j+1)}.
$$
\end{prop}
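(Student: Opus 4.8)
The plan is to evaluate the integral directly by passing to the complex coordinates $\bz(x)=u+iv$, $\ol{\bz}(x)=u-iv$ on $M_{n,2k}(\bR)=M_{n,k}(\bR)\times M_{n,k}(\bR)$. Under this substitution $\Tr x\ltrans{x}=\Tr(u\ltrans{u}+v\ltrans{v})=\Tr\bz(x)\ltrans{\ol{\bz}}(x)$ (this is the standard computation; note $\ltrans{\bz}\ol{\bz}$ and $\bz\ltrans{\ol{\bz}}$ have the same trace), and the Lebesgue measure $dx$ on $M_{n,2k}(\bR)$ becomes a constant multiple of $du\,dv$. Moreover $\cI_{k,\ut}(x,x)=\fQ_{k,\ut}\bigl(\text{upper-left }n\times n\text{ block of }\ltrans{\bz}(x)\ol{\bz}(x)\bigr)$, so setting $W=\ltrans{\bz}(x)\ol{\bz}(x)\in M_{k,k}(\bC)$ the integrand is a polynomial in the entries of the upper-left $n\times n$ block of $W$ times $e^{-2\pi\Tr W}$. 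The first step is therefore to rewrite the whole integral as an integral over the space $M_{n,k}(\bC)$ (identifying $\bz(x)$ with a complex $n\times k$ matrix $z$, with $u,v$ its real and imaginary parts, so $dz := du\,dv$), of the form $\int_{M_{n,k}(\bC)} \fQ_{k,\ut}\bigl((\ltrans{\bar z}z)_{n\times n}\bigr)\,e^{-2\pi\Tr \bar z\,{}^t z}\,dz$, up to a power of $2$ coming from the Jacobian. Actually, since only the first $n$ columns of $z$ enter $\fQ_{k,\ut}$ and the Gaussian factors over all $k$ columns, one splits off the last $k-n$ columns, which each contribute a Gaussian integral over $M_{n,1}(\bC)=\bC^n$ equal to $(2\pi)^{-n}$ (or the appropriate power), leaving an integral over $M_{n,n}(\bC)$.

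The heart of the computation is then a Gindikin-type $\Gamma$-integral: for $z\in M_{n,n}(\bC)$ one must evaluate $\int_{M_{n,n}(\bC)} \fQ_{k,\ut}(\ltrans{\bar z}z)\,e^{-2\pi\Tr \bar z\,{}^t z}\,dz$, where $\fQ_{k,\ut}=\prod_{j=1}^{n-1}\det_j^{t_j-t_{j+1}}\cdot\det_n^{k-t_n}$ is a product of powers of the leading principal minors of the positive semidefinite Hermitian matrix $Y:=\ltrans{\bar z}z$ (after a harmless relabeling $Y\mapsto{}^tY$ or $z\leftrightarrow\bar z$, which doesn't change the minors). The standard approach is to push forward the measure to the cone of positive Hermitian $n\times n$ matrices $Y$ — the density of $Y=\ltrans{\bar z}z$ is proportional to $(\det Y)^{0}$ times Lebesgue (since here the ``source'' $M_{n,n}$ matches the size, the Wishart density has exponent $0$ on $\det Y$, i.e.\ is $c\,e^{-2\pi\Tr Y}\,dY$ with $c$ an explicit constant involving $\Gamma_n(n)=\prod_{j=1}^n\Gamma(n-j+1)$) — and then use the classical Siegel/Gindikin integral
\[
   \int_{Y>0} \Bigl(\prod_{j=1}^{n-1}\det\nolimits_j(Y)^{s_j-s_{j+1}}\Bigr)\det\nolimits_n(Y)^{s_n}\,e^{-\Tr Y}\,dY=\prod_{j=1}^n\Gamma(s_j+\cdots)\ \text{(shifted by }j\text{)},
\]
evaluated via the Cholesky/QR parametrization $Y=\ltrans{\bar{t}}t$ with $t$ upper-triangular, which diagonalizes the product of minors. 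With $s_j$ read off from $\fQ_{k,\ut}$ so that $\sum_{i\le j}(\text{exponent of }\det_i)$ telescopes to $s_j$, one gets $s_n=k-t_n+\sum_{j}(t_j-t_{j+1})$-type sums collapsing to $t_j$-shifts; tracking the shifts by $j$ (from the Jacobian of Cholesky, which is $\prod 2\pi t_{jj}^{2(n-j)+1}$-type) produces exactly $\prod_{j=1}^n\Gamma(t_j-j-k+n+1)$, while the normalization constant of the Wishart density supplies the $\prod_{j=1}^n\Gamma(n-j+1)^{-1}$ in the denominator, and collecting all the $2$'s and $\pi$'s yields $2^{-\sum t_j}\pi^{-\sum t_j+nk}$.

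The main obstacle, and the step to be careful about, is bookkeeping: (i) correctly tracking the factor of $2$ in the real-to-complex Jacobian $dx\mapsto 2^{?}\,dz$ and the normalization $2\pi$ versus $\pi$ in the Gaussian exponent, which together must conspire to give precisely $2^{-\sum t_j}$ and not some other power; (ii) getting the argument shifts in the $\Gamma$-factors right — the $-j$ comes from the Jacobian of the Cholesky decomposition (equivalently from the $\rho$-shift in the Gindikin $\Gamma_n$-function), the $+n+1$ bundles the shift contributed by integrating out the extra $k-n$ columns together with the $-k$ already present in $\det_n^{k-t_n}$, and one must check these combine to $t_j-j-k+n+1$ rather than an off-by-one variant. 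The cleanest way to avoid errors is to first do the scalar case $n=1$ by hand ($\int_{\bC}|z|^{2(k-t_1)}e^{-2\pi|z|^2}\,dz$ together with $(k-1)$ trivial factors) to fix all the constants, then invoke the general Gindikin formula citing a standard reference (e.g.\ Faraut–Korányi or Shimura's confluent-hypergeometric paper) for the product-of-minors integral, and finally verify the power of $\pi$ is $-\sum t_j+nk=-\sum t_j+n(k-n)+n^2$ by counting: $n(k-n)$ half-integer... actually $n^2$ from the $\Gamma_n(n)$ normalization and $n(k-n)$ from the split-off columns combine with the main Gindikin $\pi^{0}$ to give $\pi^{-\sum t_j+nk}$ after accounting that each $\Gamma$ in the Gindikin formula is really $\Gamma_{\bC}$-free here since we are over $\bC$ but using the ``real dimension'' measure. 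I would double-check this last point against Proposition~\ref{prop:WI} and the final statement of Theorem~\ref{thm:Main} for overall consistency.
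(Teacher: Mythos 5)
Your proposal follows essentially the same route as the paper's proof: identify $M_{n,2k}(\bR)$ with $M_{n,k}(\bC)$, note that only the first $n$ columns of $z$ enter $\fQ_{k,\ut}\big((\ltrans{z}\ol{z})_{n\times n}\big)$, and reduce via a triangular parametrization (your Wishart--Gindikin/Cholesky step is just a repackaging of the paper's QR decomposition $z=\varsigma T$ with $\varsigma\in\U(n,\bR)$ and $T$ upper trapezoidal) to one-variable Gamma integrals, with the overall constant fixed by Gaussian normalization exactly as the paper fixes $C_{n,k}$. The constants you leave hedged (e.g.\ each split-off column contributes $2^{-n}$, not $(2\pi)^{-n}$, for Lebesgue measure against $e^{-2\pi\Tr x\ltrans{x}}$, and the Cholesky Jacobian carries $2$'s rather than $2\pi$'s) are indeed resolved by the normalization procedure you describe, so the argument is sound and matches the paper's computation.
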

\begin{proof}
We use $z$ as the coordinate of $M_{n,k}(\bC)$ and identify $M_{n,2k}(\bR)$ with $M_{n,k}(\bC)$ via $z=\bz(x)=u+iv$ for $x=\begin{pmatrix}u&v\end{pmatrix}$. Then
\begin{equation}\label{eq:IQxz}
   \int_{M_{n,2k}(\bR)}\cI_{k,\ut}(x,x)\,e^{-2\pi\,\Tr x\ltrans{x}}\,dx=2^{-nk}\int_{M_{n,k}(\bC)}\fQ_{k,\ut}\Big((\ltrans{z}\ol{z})_{\text{upper left $n\times n$-block}}\Big)\,e^{-2\pi\,\Tr z\ltrans{\ol{z}}}\,|dzd\ol{z}|.
\end{equation}
Every $z\in M_{n,k}(\bC)$ can be uniquely written as 
\begin{align*}
   z=&\,\varsigma\begin{pmatrix}r_1&w_{12}&\cdots&w_{1n}&w_{1,n+1}&\cdots&w_{1k}\\0&r_2&\cdots&w_{2n}&w_{2,n+1}&\cdots&w_{2k}\\\vdots&\vdots&\ddots&\vdots&\vdots&\ddots&\vdots\\0&0&\cdots&r_n&w_{n,n+1}&\cdots&w_{nk}\end{pmatrix},&\begin{array}{l}\varsigma\in \U(n,\bR),\,r_1,\dots,r_n\in\bR_{>0},\\w_{ij}\in\bC,\,1\leq i\leq n, i<j\leq k\end{array}.
\end{align*}
Let $\omega_{\U}$ be the Haar measure on $\U(n,\bR)$ such that the total volume is $1$. Then there exists a constant $C_{n,k}\in\bR_{\geq 0}$ such that
\begin{align*}
   |dzd\ol{z}|&=C_{n,k}\,r^{2n-1}_1r^{2n-3}_2\dots r_n\,|dr_1\,dr_2\dots dr_n\,dw\,d\ol{w}\wedge\omega_{\U}|,&dw=\bigwedge_{\substack{1\leq i\leq 2\\i<j\leq k}}dw_{ij}.
\end{align*}
By evaluating the integral of the Gaussian function $e^{-\Tr z\ltrans{\ol{z}}}$, we get
$$
   C_{n,k}=2^n\,(2\pi)^{\frac{n(n+1)}{2}}\prod_{j=1}^n\Gamma(n-j+1)^{-1}.
$$
By definition
$$
   \fQ_{k,\ut}\Big((\ltrans{z}\ol{z})_{\text{upper left $n\times n$-block}}\Big)=r^{2t_1-2k}_1r^{2t_2-2k}_2\dots r^{2t_n-2k}_n.
$$
Hence,
\begin{align*}
   \eqref{eq:IQxz}=&\,2^{-nk}\,C_{n,k}\int_{\bR^n_{>0}}r^{2t_1-2k+2n-1}_1r^{2t_2-2k+2n-3}_2\dots r^{2t_n-2k+1}_n\,e^{-2\pi\sum_{j=1}^n r^2_j}\,dr_1\dots dr_n\\
   &\times\prod_{\substack{1\leq i\leq 2\\2\leq j\leq k,\,i<j}}\int_{\bC}e^{-2\pi\,|w_{ij}|^2}\,dw_{ij}\,d\ol{w}_{ij}\\
   =&\,2^{-nk}\,C_{n,k}\,\prod 2^{-n}(2\pi)^{-\sum_{j=1}^n t_j+nk-\frac{n(n+1)}{2}}\prod_{j=1}^n\Gamma(t_j-k+n+1-j)
\end{align*}
\end{proof}

\subsection{The archimedean zeta integral}
We sum up results in previous sections towards computing $Z_\infty\left(f_{k,\ut},v^\vee_{\ut},v_{\ut}\right)$. Combining Proposition~\ref{prop:ZW}, \eqref{eq:WFD} and Propositions~\ref{prop:WI}, \ref{prop:IGamma}, we get
\begin{equation*}
   Z_\infty\left(f_{k,\ut},v^\vee_{\ut},v_{\ut}\right)=\frac{i^{-\sum_{j=1}^nt_j+nk}2^{-2\sum_{j=1}^nt_j+nk}\pi^{-\sum_{j=1}^nt_j+nk}}{\dim\left(\GL(n),\ut\right)}\,\frac{\dim\,\lambda(\ut,k)}{d(\cD_{\ut},dg)}\frac{\prod_{j=1}^n\Gamma(t_j-j-k+n+1)}{\prod_{j=1}^n\Gamma(n-j+1)}.
\end{equation*}
The dimension of the $\OO(2k)$-representation $\lambda(\ut,k)$ equals the dimension of the irreducible algebraic $\SO(2k)$-representation of highest weight $(t_1-k,\dots,t_n-k,0,\dots,0)$. The Weyl dimension formula tells that
$$
   \dim\left(\SO(2k),(a_1,\dots a_k)\right)=\prod_{1\leq i< j\leq k}\frac{(a_i-a_j-i+j)(a_i+a_j+2k-i-j)}{(-i+j)(2k-i-j)},
$$
so			
\begin{align*}
   \dim\,\lambda(\ut,k)=&\frac{\prod\limits_{1\leq i< j\leq n}(t_i-t_j-i+j)(t_i+t_j-i-j)}{\prod\limits_{\substack{1\leq i<j\leq k\\i\leq n}}(-i+j)(2k-i-j)}\prod\limits_{1\leq j\leq n}(t_j-1)\frac{\Gamma(t_j-j+k-n)}{\Gamma(t_j-j-k+n+1)}\\
   =&\frac{\prod\limits_{1\leq i< j\leq n}(t_i-t_j-i+j)(t_i+t_j-i-j)}{2^{2nk-n(n+2)}\pi^{-n^2}\,\Gamma_{2n}(k)}\prod\limits_{1\leq j\leq n}(t_j-j)\frac{\Gamma(t_j-j+k-n)}{\Gamma(t_j-j-k+n+1)}.
\end{align*}

According to the formulas on the formal degree in \cite[Corollary of Lemma 23.1]{HC-HIII}(see also \cite[Lemma 2.3]{HII-FD}), we have
\begin{align*}
   d(\cD_{\ut},dg)=&\,\frac{2^{\frac{-n}{2}-\dim G(\bR)/K+\mr{rk}\, G(\bR)/K}(2\pi)^{\#\text{ positive roots of }G}}{\vol(T,dT)\,\vol(K,dK)^{-1}}\,\left|\prod_{\alpha\succ 0}\left<H_\alpha,(t_1-1,\dots,t_n-n)\right>\right|\\
   =&\,\frac{2^{-n^2}\pi^{-\frac{n^2+n}{2}}}{\prod_{j=1}^n\Gamma(j)}\prod_{1\leq i< j\leq n}(t_i-t_j-i+j)(t_i+t_j-i-j)\prod_{j=1}^n(t_j-j).
\end{align*}
Combining all the formulas proves our formula for the archimedean zeta integral.
\begin{thm}\label{thm:Main}
For integers $t_1\geq t_2\geq\dots\geq t_n\geq k\geq n+1$, let $f_{k,\ut}$ be the section inside the degenerate principal series $I(k)$ on $\Sp(4n,\bR)$ as defined in \eqref{eq:ourf}. Let $v_{\ut}$ be the highest weight vector in the lowest $K_G$-type of the holomorphic discrete series $\cD_{\ut}$, and $v^\vee_{\ut}\in\cD^\vee_{\ut}$ be its dual vector. Then we have the following formula for the doubling archimedean zeta integral associated with $f_{k,\ut}$ and the matrix coefficient $\left<g\cdot v^\vee_{\ut},v_{\ut}\right>$,
\begin{align*}
   Z_\infty\left(f_{k,\ut},v^\vee_{\ut},v_{\ut}\right)=\frac{i^{-\sum_{j=1}^n t_j+nk}2^{-2\sum_{j=1}^nt_j-nk+2n^2+2n}\pi^{-\sum_{j=1}^nt_j+nk+\frac{3n^2+n}{2}}}{\Gamma_{2n}(k)\,\dim\left(\GL(n),\ut\right)}\,\prod_{j=1}^n\Gamma(t_j-j+k-n).
\end{align*}
\end{thm}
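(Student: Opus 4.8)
The plan is to assemble the pieces developed in the preceding two sections, so the argument is essentially bookkeeping. Starting from Proposition~\ref{prop:ZW}, which rewrites $Z_\infty(f_{k,\ut},v^\vee_{\ut},v_{\ut})$ as $i^{nk}2^{-\sum_{j=1}^n t_j+nk}\dim\lambda(\ut,k)$ times $\int_{G(\bR)}\langle\omega_{2k}(g,1)\phi_{Q_{k,\ut}},\phi_{\wt{Q}_{k,\ut}}\rangle\langle g\cdot v^\vee_{\ut},v_{\ut}\rangle\,dg$, I would first use the proportionality \eqref{eq:SD} of the two matrix coefficients — valid because the definite orthogonal group forces the Weil representation to decompose as a direct sum of irreducibles, so that Schur's lemma applies — to factor this integral as $\langle\phi_{Q_{k,\ut}},\phi_{\wt{Q}_{k,\ut}}\rangle$ times $\int_{G(\bR)}\langle g\cdot v_{\ut},v^\vee_{\ut}\rangle\langle g\cdot v^\vee_{\ut},v_{\ut}\rangle\,dg$, then evaluate the second factor as $d(\cD_{\ut},dg)^{-1}$ by the Schur orthogonality relations for discrete-series matrix coefficients; this is \eqref{eq:WFD}. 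Substituting Proposition~\ref{prop:WI} for $\langle\phi_{Q_{k,\ut}},\phi_{\wt{Q}_{k,\ut}}\rangle$ and then Proposition~\ref{prop:IGamma} for the Gaussian integral of $\cI_{k,\ut}(x,x)$ then expresses $Z_\infty$ as an elementary prefactor times $\dim\lambda(\ut,k)/d(\cD_{\ut},dg)$ times $\prod_{j=1}^n\Gamma(t_j-j-k+n+1)/\prod_{j=1}^n\Gamma(n-j+1)$, in which the only quantities not yet in closed form are the dimension of the $\OO(2k)$-type and the formal degree.

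Next I would make those two quantities explicit. Since $\dim\lambda(\ut,k)$ equals the dimension of the $\SO(2k)$-irreducible of highest weight $(t_1-k,\dots,t_n-k,0,\dots,0)$, I would apply the Weyl dimension formula and split the product over pairs $(i,j)$ with $i<j$ into the ranges $i<j\le n$, $i\le n<j$, and $n<i<j$: the last range contributes $1$, the first range produces the ``Weyl polynomial'' factors $\prod_{i<j}(t_i-t_j-i+j)(t_i+t_j-i-j)$, and the middle range telescopes into ratios of Gamma functions, out of which the factor $\Gamma_{2n}(k)$ emerges together with matched powers of $2$ and $\pi$. For the formal degree I would quote Harish-Chandra's formula \cite{HC-HIII}: with the normalization fixed in the Notation section ($\vol(K_G)=1$ and the standard invariant measure on $\bH_n$), $d(\cD_{\ut},dg)$ equals a constant depending only on $n$, the Haar measure and the root data of $\Sp(2n)$, times $\bigl|\prod_{\alpha\succ 0}\langle H_\alpha,(t_1-1,\dots,t_n-n)\rangle\bigr|$, and this last product is, up to reindexing, $\prod_{i<j}(t_i-t_j-i+j)(t_i+t_j-i-j)\prod_{j=1}^n(t_j-j)$.

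The substance of the argument — and the only place a slip is likely — is the bookkeeping of the ensuing cancellations. The Weyl polynomial factors $\prod_{i<j}(t_i-t_j-i+j)(t_i+t_j-i-j)$ and $\prod_{j=1}^n(t_j-j)$ cancel between $\dim\lambda(\ut,k)$ and $d(\cD_{\ut},dg)$; the factor $\prod_{j=1}^n\Gamma(t_j-j-k+n+1)$ coming from Proposition~\ref{prop:IGamma} cancels the reciprocal Gamma factor sitting inside $\dim\lambda(\ut,k)$, leaving only $\prod_{j=1}^n\Gamma(t_j-j+k-n)$; the denominators $\prod_{j=1}^n\Gamma(n-j+1)$ are absorbed; and collecting the surviving powers of $i$, $2$ and $\pi$ produces the exponents $-\sum_{j=1}^n t_j+nk$, $-2\sum_{j=1}^n t_j-nk+2n^2+2n$ and $-\sum_{j=1}^n t_j+nk+\tfrac{3n^2+n}{2}$ recorded in the statement. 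This yields the asserted formula. As a consistency check I would specialise to scalar weight $\ut=(k,\dots,k)$, where $\dim\lambda(\ut,k)=\dim(\GL(n),\ut)=1$ and the formula should collapse to the classical B{\"o}cherer--Garrett--Shimura evaluation of \cite{Sh00,BS}.
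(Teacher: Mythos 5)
Your proposal follows the paper's own proof essentially verbatim: the paper likewise assembles Proposition~\ref{prop:ZW}, \eqref{eq:WFD}, Propositions~\ref{prop:WI} and \ref{prop:IGamma}, then substitutes the Weyl dimension formula for $\dim\lambda(\ut,k)$ (with exactly the cancellation of $\prod_j\Gamma(t_j-j-k+n+1)$ and the emergence of $\Gamma_{2n}(k)$ that you describe) and Harish-Chandra's formal degree formula for $d(\cD_{\ut},dg)$, after which the Weyl polynomial factors cancel and only the stated powers of $i$, $2$, $\pi$ and $\prod_j\Gamma(t_j-j+k-n)$ survive. The argument is correct and matches the paper's route; the scalar-weight consistency check is a sensible addition but not part of the paper's proof.
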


\bibliographystyle{halpha}
\bibliography{BiStd}

\begin{thebibliography}{MVW87}

\bibitem[B\"85]{BoDouble}
Siegfried B\"{o}cherer.
\newblock \"{U}ber die {F}unktionalgleichung automorpher {$L$}-{F}unktionen zur
  {S}iegelschen {M}odulgruppe.
\newblock {\em J. Reine Angew. Math.}, 362:146--168, 1985.

\bibitem[B{\"o}c85]{BoDiff}
S~B{\"o}cherer.
\newblock {\"U}ber die {F}ourier-{J}acobi-{E}ntwicklung {S}iegelscher
  {E}isensteinreihen {II}.
\newblock {\em Mathematische Zeitschrift}, 189(1):81--110, 1985.

\bibitem[BS00]{BS}
S.~B\"{o}cherer and C.-G. Schmidt.
\newblock {$p$}-adic measures attached to {S}iegel modular forms.
\newblock {\em Ann. Inst. Fourier (Grenoble)}, 50(5):1375--1443, 2000.

\bibitem[Coa91]{Coates}
John Coates.
\newblock Motivic {$p$}-adic {$L$}-functions.
\newblock In {\em {$L$}-functions and arithmetic ({D}urham, 1989)}, volume 153
  of {\em London Math. Soc. Lecture Note Ser.}, pages 141--172. Cambridge Univ.
  Press, Cambridge, 1991.

\bibitem[CPR89]{CoPerrin}
John Coates and Bernadette Perrin-Riou.
\newblock On {$p$}-adic {$L$}-functions attached to motives over {${\bf Q}$}.
\newblock In {\em Algebraic number theory}, volume~17 of {\em Adv. Stud. Pure
  Math.}, pages 23--54. Academic Press, Boston, MA, 1989.

\bibitem[EHLS16]{EHLS}
Ellen Eischen, Michael Harris, Jianshu Li, and Christopher Skinner.
\newblock $p$-adic {$L$}-functions for unitary groups, part {II}: zeta-integral
  calculations, 2016, http://arxiv.org/abs/1602.01776.

\bibitem[EW16]{EisWan}
Ellen Eischen and Xin Wan.
\newblock $p$-adic {E}isenstein series and {$L$}-functions of certain cusp
  forms on definite unitary groups.
\newblock {\em J. Inst. Math. Jussieu}, 15(3):471--510, 2016.

\bibitem[Gar84]{GaPull}
Paul~B. Garrett.
\newblock Pullbacks of {E}isenstein series; applications.
\newblock In {\em Automorphic forms of several variables ({K}atata, 1983)},
  volume~46 of {\em Progr. Math.}, pages 114--137. Birkh\"{a}user Boston,
  Boston, MA, 1984.

\bibitem[Gar08]{GaAzint}
Paul Garrett.
\newblock Values of {A}rchimedean zeta integrals for unitary groups.
\newblock In {\em Eisenstein series and applications}, volume 258 of {\em
  Progr. Math.}, pages 125--148. Birkh\"{a}user Boston, Boston, MA, 2008.

\bibitem[Har81]{Ha81}
Michael Harris.
\newblock Special values of zeta functions attached to {S}iegel modular forms.
\newblock {\em Ann. Sci. \'{E}cole Norm. Sup. (4)}, 14(1):77--120, 1981.

\bibitem[Har86]{HaBun}
Michael Harris.
\newblock Arithmetic vector bundles and automorphic forms on {S}himura
  varieties. {II}.
\newblock {\em Compositio Math.}, 60(3):323--378, 1986.

\bibitem[Har08]{HaSim}
Michael Harris.
\newblock A simple proof of rationality of {S}iegel-{W}eil {E}isenstein series.
\newblock In {\em Eisenstein series and applications}, volume 258 of {\em
  Progr. Math.}, pages 149--185. Birkh\"{a}user Boston, Boston, MA, 2008.

\bibitem[HC76]{HC-HIII}
Harish-Chandra.
\newblock Harmonic analysis on real reductive groups. {III}. {T}he
  {M}aass-{S}elberg relations and the {P}lancherel formula.
\newblock {\em Ann. of Math. (2)}, 104(1):117--201, 1976.

\bibitem[HII08]{HII-FD}
Kaoru Hiraga, Atsushi Ichino, and Tamotsu Ikeda.
\newblock Formal degrees and adjoint {$\gamma$}-factors.
\newblock {\em J. Amer. Math. Soc.}, 21(1):283--304, 2008.

\bibitem[Ibu99]{Ib}
Tomoyoshi Ibukiyama.
\newblock On differential operators on automorphic forms and invariant
  pluri-harmonic polynomials.
\newblock {\em Comment. Math. Univ. St. Paul.}, 48(1):103--118, 1999.

\bibitem[JV79]{JV}
Hans~Plesner Jakobsen and Mich\`ele Vergne.
\newblock Restrictions and expansions of holomorphic representations.
\newblock {\em J. Funct. Anal.}, 34(1):29--53, 1979.

\bibitem[KR90]{KRPoles}
Stephen~S. Kudla and Stephen Rallis.
\newblock Poles of {E}isenstein series and {$L$}-functions.
\newblock In {\em Festschrift in honor of {I}. {I}. {P}iatetski-{S}hapiro on
  theoccasion of his sixtieth birthday, {P}art {II} ({R}amat{A}viv, 1989)},
  volume~3 of {\em Israel Math. Conf. Proc.}, pages 81--110. Weizmann,
  Jerusalem, 1990.

\bibitem[KV78]{KVWeil}
M.~Kashiwara and M.~Vergne.
\newblock On the {S}egal-{S}hale-{W}eil representations and harmonic
  polynomials.
\newblock {\em Invent. Math.}, 44(1):1--47, 1978.

\bibitem[Li90]{LiThetaCoh}
Jian-Shu Li.
\newblock Theta lifting for unitary representations with nonzero cohomology.
\newblock {\em Duke Math. J.}, 61(3):913--937, 1990.

\bibitem[Liu15]{LiuD-U21}
Dongwen Liu.
\newblock Archimedean zeta integrals on {$U(2,1)$}.
\newblock {\em J. Funct. Anal.}, 269(1):229--270, 2015.

\bibitem[Liu16]{SLF}
Zheng Liu.
\newblock $p$-adic ${L}$-functions for ordinary families of symplectic groups.
\newblock {\em Preprint}, 2016, http://www.math.mcgill.ca/zliu/SLF IV.pdf.
\newblock To appear in {\it J. Inst. Math. Jussieu}.

\bibitem[LL16]{LiuD-Un1}
Bingchen Lin and Dongwen Liu.
\newblock Archimedean zeta integrals on {$U(n,1)$}.
\newblock {\em J. Number Theory}, 169:62--78, 2016.

\bibitem[LR05]{LapidRallis}
Erez~M. Lapid and Stephen Rallis.
\newblock {O}n the local factors of representations of classical groups.
\newblock In {\em Automorphic representations, {$L$}-functions and
  applications: progress and prospects}, volume~11 of {\em Ohio State Univ.
  Math. Res. Inst. Publ.}, pages 309--359. de Gruyter, Berlin, 2005.

\bibitem[LR18]{LRtz}
Zheng Liu and Giovanni Rosso.
\newblock Non-cuspidal {H}ida theory for {S}iegel modular forms and trivial
  zeros of $p$-adic ${L}$-functions.
\newblock {\em Preprint}, 2018, http://arxiv.org/abs/1803.10273.

\bibitem[MVW87]{MVW}
C.~M\oe{}glin, M.-F. Vign\'{e}ras, and J.-L. Waldspurger.
\newblock {\em Correspondances de {H}owe sur un corps p-adique}, volume 1291 of
  {\em Lecture Notes in Mathematics}.
\newblock Springer-Verlag, Berlin, 1987.

\bibitem[PSR87]{PSR}
I.~Piatetski-Shapiro and Stephen Rallis.
\newblock ${L}$-functions for the classical groups.
\newblock volume 1254 of {\em Lecture Notes in Mathematics}, pages 1--52.
  Springer-Verlag, Berlin, 1987.

\bibitem[Shi84]{ShDiff}
Goro Shimura.
\newblock On differential operators attached to certain representations of
  classical groups.
\newblock {\em Invent. Math.}, 77(3):463--488, 1984.

\bibitem[Shi90]{ShLie}
Goro Shimura.
\newblock {I}nvariant differential operators on {H}ermitian symmetric spaces.
\newblock {\em Ann. of Math.}, 132(2):237--272, 1990.

\bibitem[Shi00]{Sh00}
Goro Shimura.
\newblock {\em Arithmeticity in the theory of automorphic forms}, volume~82 of
  {\em Mathematical Surveys and Monographs}.
\newblock American Mathematical Society, Providence, RI, 2000.

\bibitem[Yam14]{Yam}
Shunsuke Yamana.
\newblock {$L$}-functions and theta correspondence for classical groups.
\newblock {\em Invent. Math.}, 196(3):651--732, 2014.

\end{thebibliography}
\end{document}